\def\Cx{{\mathbb C}}
\def\Px{{\mathbb P}}
\def\Zx{{\mathbb Z}}
\def\Sg{{\rm Sg}}
\def\gr{{\rm gr}}
\def\id{{\rm id}}
\newcommand{\As}{\mathscr{A}}
\newcommand{\Cs}{\mathscr{C}}
\newcommand{\Es}{\mathscr{E}}
\newcommand{\Fs}{\mathscr{F}}
\newcommand{\Gs}{\mathscr{G}}
\newcommand{\Hs}{\mathscr{H}}
\newcommand{\Ks}{\mathscr{K}}
\newcommand{\Os}{\mathscr{O}}
\newcommand{\Rs}{\mathscr{R}}
\newcommand{\Ss}{\mathscr{S}}
\newcommand{\Zs}{\mathscr{Z}}
\newcommand{\Ld}{\mathbf{L}}
\newcommand{\Rd}{\mathbf{R}}
\newcommand{\Dd}{\mathbf{D}}
\newcommand{\bHom}{\mathbf{Hom}}
\newcommand{\fl}{\rightarrow}
\newcommand{\lfl}{\longrightarrow}
\newcommand\uMod{\operatorname{Mod \textendash \!}}
\newcommand\umod{\operatorname{mod \textendash \!}}
\newcommand\ueMod{\operatorname{Mod}}
\newcommand\uemod{\operatorname{mod}}
\newcommand\uQcoh{\operatorname{Qcoh}}
\newcommand\uCoh{\operatorname{Coh}}
\newcommand\uExt{\operatorname{Ext}}
\newcommand\uAut{\operatorname{Aut}}
\newcommand\uMF{\operatorname{MF}}
\newcommand\uPerf{\operatorname{Perf}}
\newcommand\uPair{\operatorname{Pair}}
\newcommand\uCok{\operatorname{Cok}}
\newcommand\uHilb{\operatorname{Hilb}}
\newcommand\uHom{\operatorname{Hom}}
\newcommand\ucoker{\operatorname{coker}}
\newcommand\uSpec{\operatorname{Spec}}
\newcommand\uSing{\operatorname{Sing}}
\newcommand\uSL{\operatorname{SL}}
\newcommand\uU{\operatorname{U}}
\newcommand\utot{\operatorname{tot}}
\newcommand\ui{\operatorname{i}}
\newcommand\ud{\operatorname{d \!}}
\theoremstyle{plain}
 \newtheorem{thm}{Theorem}[section]
 \newtheorem{lem}[thm]{Lemma}
 \newtheorem{prop}[thm]{Proposition}
\newtheorem{assum}[thm]{Assumption}
 \newtheorem{cor}[thm]{Corollary}
 \theoremstyle{definition}
 \newtheorem{dfn}[thm]{Definition}
\newtheorem{assertion}[thm]{Assertion}
 \newtheorem{princ}[thm]{Principle}
\newtheorem*{ack}{{\footnotesize Acknowledgments}}
 \theoremstyle{remark}
 \newtheorem*{note}{Note added}
\newtheorem*{add}{Addendum}
\author{Alexander Quintero V\'{e}lez}
\email{quintero@math.uu.nl}
\address{Mathematisch Instituut\\
Universiteit Utrecht\\
P.O.~Box 80.010, NL-3508 TA Utrecht\\
Nederland}
\title{McKay correspondence for Landau-Ginzburg models}
\begin{document}
\subjclass[2000]{18E30, 81T30}
\keywords{Derived categories, triangulated categories of singularities, matrix factorizations, Landau-Ginzburg models, categories of D-branes}
\begin{abstract} 
In this paper we prove an analogue of the McKay correspondence for Landau-Ginzburg models. Our proof is based on the ideas introduced by T.~Bridgeland, A.~King and M.~Reid, which reformulate and generalize the McKay correspondence in the language of derived categories, along with the techniques introduced by J.-C.~Chen.
\end{abstract}

\maketitle

\section{Introduction}
The goal of this paper is to describe an analogue of the McKay correspondence for Landau-Ginzburg models. Before going into details, it is useful to review some aspects of the McKay correspondence that are relevant for our considerations.

In its original form, the McKay correspondence was observed as a nice relation between the irreducible representations of a finite subgroup $G$ of $\uSL(2,\Cx)$ on the one hand, and the geometry of the exceptional divisor in a minimal resolution of $\Cx^2/G$ on the other hand (cf~\cite{McK80}). The first hint of a McKay correspondence in higher dimensions came from the work of L.~Dixon, J.~Harvey, C.~Vafa and E.~Witten. It was conjectured in \cite{DHVW86} that for a finite subgroup $G \subset \uSL(n,\Cx)$ acting on $\Cx^n$, the Euler characteristic of a crepant resolution $Y$ of the quotient space $\Cx^n/G$ equals the number of conjugacy classes, or equivalently the number of equivalence classes of irreducible representations of $G$. If $n=2$, the equality can be viewed as a version of the McKay correspondence. As a result, this formula may be regarded as a generalization of the McKay correspondence to an arbitrary dimension $n$. The McKay correspondence became recently a subject of intense study in both physics and mathematics. However, the term is now primarily used to indicate a relationship between the various invariants of the actions of finite automorphism groups on quasiprojective varieties and resolutions of the corresponding quotients by such actions.

The guiding principle behind the McKay correspondence was stated by M. Reid along the following lines:

\begin{princ}\label{ReidPrinciple}
Let $M$ be an algebraic variety, $G$ a group of automorphisms of $M$, and $Y$ a crepant resolution of singularities of $X=M/G$. Then the answer to any well posed question about the geometry of $Y$ is the $G$-equivariant geometry of $M$.
\end{princ}

Applied to the case of quotient singularities $X=\Cx^n/G$ arising from a finite subgroup $G \subset \uSL(n,\Cx)$, the content of this slogan is that the $G$-equivariant geometry of $M=\Cx^n$ already {\it knows} about the crepant resolution $Y$. In particular, any two crepant resolutions of $X$ should have equivalent geometries.

Reid suggested that one manifestation of Principle \ref{ReidPrinciple}  should be a derived equivalence $\Dd(Y) \cong \Dd^G(M)$, where $\Dd(Y)$ is the bounded derived category of coherent sheaves on $Y$ and $\Dd^G(M)$ is the bounded derived category of $G$-equivariant coherent sheaves on $M$. This has been worked out by Kapranov and Vasserot \cite{KV00} in dimension $n=2$ and generalized to higher dimensions including all cases of finite subgroups of $\uSL(3,\Cx)$ by Bridgeland, King and Reid \cite{BKR01}. In the latter case the quotient singularity $X=\Cx^3/G$ always has a crepant resolution, a distinguished choice being given by the Hilbert scheme of $G$-orbits $G\text{-}\!\uHilb(M)$. This scheme is perhaps best thought of as a moduli space of representations of the skew group algebra $A=\Cx[x,y,z] * G$ that are stable with respect to a certain choice of stability condition. Indeed, this is closely related to the physicist's understanding of D-branes as objects in the derived category. 

In string theory, space-time $X$ is represented by a two-dimensional quantum field theory with $N=2$ supersymmetry. A quite important class of such theories are nonlinear sigma models on a K\"{a}hler manifold $X$. In this case, E.~Witten explained how to manufacture two dimensional topological field theories. He showed that any nonlinear sigma model with a K\"{a}hler target space $X$ admits a topologically twisted version called the A-model; if $X$ is a Calabi-Yau manifold, there is another topologically twisted theory, the B-model. A similar construction exists in the equivariant setting. Given an action of a finite group $G$ on a space $X$ satisfying certain properties, one can construct a two-dimensional topological field theory which represents the $G$-equivariant physics of $X$. To be more precise, one associates a $G$-gauged sigma model to a presentation of the quotient stack $[X/G]$:~the gauged sigma model can be interpreted as a sigma model on $[X/G]$.

Open strings are associated to extended objects, different from strings, which go under the name of D-branes. Loosely speaking, a D-brane is a  \textquoteleft nice\textquoteright~boundary condition for the two-dimensional quantum field theory. To any topologically twisted sigma model one can associate a category of D-branes. In the case of the topological B-model of a Calabi-Yau $X$, the category of D-branes is believed to be equivalent to the bounded derived category $\Dd(X)$ of coherent sheaves on $X$. In the equivariant setting this should be replaced by the bounded derived category $\Dd([X/G]) \cong \Dd^G(X)$ of $G$-equivariant coherent sheaves on $X$.

From the previous consideration we see that the McKay correspondence has a completely natural explanation in terms of nonlinear sigma models with boundaries. Indeed, arguments from topological open string theory, formalized in the \textquoteleft decoupling statement\textquoteright~of \cite{BDLR00}, suggest that there is an equivalence $\Dd(Y) \cong \Dd([M/G])$ for any crepant resolution $Y$ of the singularities of $X=M/G$.

In this paper we study another class of topological field theories: topological Landau-Ginzburg models. The general definition of a Landau-Ginzburg model involves, besides a choice of a target space $X$, a choice of a holomorphic function $W \colon X \fl \Cx$ called a superpotential. In particular, non-trivial Landau-Ginzburg models require a non-compact target space $X$. For a smooth affine variety $X=\uSpec A$, a simple description of the category of D-branes in Landau-Ginzburg models has been proposed by M. Kontsevich and derived from physical considerations in \cite{KL02}. It turns out that the category of D-branes is equivalent to the category $\uMF(W)$ of matrix factorizations of $W$. 

For non-affine $X$ the following construction was proposed \cite{O105}. Suppose that we are given a Landau-Ginzburg superpotential $W\colon X \fl \Cx$ with a single critical value at $0 \in \Cx$. Let $X_0$ denote the fiber of $W$ over $0$. Consider the bounded derived category of coherent sheaves on $X_0$. A perfect complex is an object of $\Dd(X_0)$ which is quasi-isomorphic to a bounded complex of locally free sheaves. One can define a triangulated category of singularities $\Dd_{\Sg}(X_0)$ as the quotient of $\Dd(X_0)$ by the full subcategory of perfect complexes $\uPerf(X_0)$. If $X_0$ were non-singular, the quotient would be trivial, since in that case any object in $\Dd(X_0)$ would have a finite locally free resolution. Therefore $\Dd_{\Sg}(X_0)$ depends only on the singular points of $X_0$. The main result of \cite{O105} is that the category of matrix factorizations $\uMF(W)$ for a smooth affine $X=\uSpec A$ is equivalent to $\Dd_{\Sg}(X_0)$. Thus for non-affine $X$ the category $\Dd_{\Sg}(X_0)$ can be considered as a definition of the category of D-branes. 

One may also consider Landau-Ginzburg models on orbifolds. Such models are particularly important because they provide an alternative description of certain Calabi-Yau sigma models. In the affine case D-branes are described by the category $\uMF^G(W)$ of $G$-equivariant matrix factorizations, cf.~\cite{ADD04,ADDF04} and Section~\ref{sec5} of this paper. In general, one may consider a full subcategory of perfect complexes $\uPerf([X_0/G])$, which is formed by bounded complexes of locally free sheaves in $\Dd([X_0/G])\cong \Dd^G(X_0)$, and also the quotient category $\Dd^G_{\Sg}(X_0)=\Dd^G(X_0)/\uPerf([X_0/G])$. In Section~\ref{sec6} we show that the category of $G$-equivariant matrix factorizations $\uMF^G(W)$ for a smooth affine $X=\uSpec A$ is equivalent to $\Dd^G_{\Sg}(X_0)$. 

Let us assert our version of the McKay correspondence for Landau-Ginzburg models. Consider the Landau-Ginzburg model on the affine space $M=\Cx^n$ with polynomial superpotential $f\colon M \fl \Cx$ and its orbifold with respect to the action of some finite subgroup $G$ of $\uSL(n,\Cx)$. Let $\tau\colon Y \fl  M/G$ be a crepant resolution and consider the Landau-Ginzburg model $(Y,g)$, where $g$ is the pullback of $f$ to $Y$. We expect the following to hold.

\begin{assertion}\label{assertion1.2}
The category of D-branes in the Landau-Ginzburg model $(Y,g)$ is equivalent to the category of D-branes in the Landau-Ginzburg orbifold $(M,f)$.  
\end{assertion}

In this paper we prove a special case of this assertion. The main result is the following. Consider the Landau-Ginzburg orbifold defined by $(M,f)$, where the superpotential $f$ is a regular $G$-invariant function with an isolated critical point at the origin and $G$ is a finite subgroup of $\uSL(n,\Cx)$ which acts on $M=\Cx^n$ freely outside the origin. Assuming favorable circumstances, a crepant resolution is given by the irreducible component $Y \subset G\text{-}\!\uHilb(M)$ dominating $X=M/G$. Then the category of singularities $\Dd_{\Sg}(Y_0)$ of the fiber $Y_0$ is equivalent to the $G$-equivariant category of singularities $\Dd_{\Sg}^G(M_0)$ of the fiber $M_0$. Bearing in mind that the categories of singularities are equivalent to the categories of D-branes we obtain the connection between D-branes mentioned above.   

To finish this introduction we make some remarks of a more philosophical nature. Noncommutative geometry, as propagated by M.~Kontsevich in \cite{Kont98} is based on the idea that to do geometry you really don't need a space, all you need is a category of sheaves on this would-be space. A noncommutative space $X$ is a small triangulated $\Cx$-linear category $\Cs_X$ which is Karoubi closed and enriched over complexes of $\Cx$-vector spaces (this notion is explained in detail in \cite{CDHPS07}). If $X$ is a smooth scheme of finite type, then $X$ can be considered as a noncommutative space with $\Cs_X=\Dd(X)$. Any Landau-Ginzburg model $(X,W)$ is also a noncommutative space with $\Cs_{(X,W)}=\Dd_{\Sg}(X_0)$. We see that the physical meaning of noncommutative space is to replace the space by the category of D-branes. If we return to the McKay correspondence, then we deduce that the noncommutative space $Y$ is isomorphic to the noncommutative space $A=\Cx[x,y,z]* G$. This leads naturally to a generalized notion of McKay correspondence as an isomorphism of noncommutative spaces. Note that this fits well with M. Reid's Principle \ref{ReidPrinciple}, where the word  \textquoteleft geometry\textquoteright~was left deliberately vague. We can restate assertion~\ref{assertion1.2} by saying that the Landau-Ginzburg model $(Y,g)$ and the Landau-Ginzburg orbifold $(M,f)$ are isomorphic as noncommutative spaces.  

\begin{note}
After this paper was posted on the arXiv, I have learned that similar results were obtained by S.~Mehrotra in his PhD dissertation \cite{Meh05}. In the situation described above, he has shown that the $G$-equivariant category of singularities $\Dd_{\Sg}^G(M_0)$ embeds fully and faithfully into the category of singularities $\Dd_{\Sg}(Y_0)$. However, Mehrotra approach is different to ours in that it does not use the techniques of \cite{CH02} in the context of the generalized McKay correspondence. Our proof uses in an essential way these techniques. It is a natural question to try and understand to what extent the result really depends on the derived McKay correspondence, but not a question we explore in this paper.  
\end{note}

\begin{ack}
{\footnotesize I'd like to thank K.~Hori, whose lecture in Vienna led me to the problem. Conversation with many people were very helpful, the incomplete list includes D.~Orlov, A.~King, P.~Horja, A.~Craw, E.~Looijenga, D.~Siersma. I am very grateful to all these people. Many thanks also go to J.~Stienstra for his suggestions and advice. I have also benefitted a lot from e-mail correspondence with D.~H.~Ruip\'{e}rez and M.~Herbst. Finally, I am grateful to the anonymous referee, whose many comments helped me to correct the content and improve the exposition of the manuscript.}
\end{ack}

\section{The physical argument}\label{phys-arg}
We begin with some heuristic physical discussion aimed at justifying assertion \ref{assertion1.2}. The set-up is the so-called gauged linear sigma model.

The gauged linear sigma model is a very useful model which in an appropriate sense \textquoteleft interpolates\textquoteright~between nonlinear sigma models on Calabi-Yau manifolds and Landau-Ginzburg orbifolds. Such a model is determined by a \textquotedblleft radial\textquotedblright~parameter $r$.

Here are some of the basic ideas concerning gauged linear sigma models. We will just indicate enough details to see the parameter $r$ appearing. Let us consider the $\uU(1)$ gauge theory with $n$ chiral matter superfields $X_1,\dots,X_n$ of charge $1$, and one chiral superfield $P$ of charge $-n$. We also consider a twisted chiral superfield $\Sigma$ with values in the complexification of the adjoint bundle over $2|4$-superspace. Write each of these superfields in components
\begin{align*}
X_i &=x_i+\theta(\cdots)+\cdots \\
P &=p+\theta(\cdots)+\cdots \\
\Sigma &= \sigma +\theta(\cdots)+\cdots
\end{align*}
The bosonic potential is a function $V=V(x,p,\sigma)$ of the bosonic components of these superfields. It has the form
$$
V=\frac{1}{2 e^2}D^2+|\sigma|^2 \Big( \sum_{i=1}^n |x_i|^2 + n^2 |p|^2  \Big).
$$
The \textquotedblleft D-term\textquotedblright~is equal to
$$
D=\sum_{i=1}^n |x_i|^2 -n |p|^2-r.
$$
This is actually a familiar function mathematically; it is the moment map generating the
$\uU(1)$-action on the flat K\"{a}hler manifold $Z=\Cx^{n+1}$ with coordinates $x_1,\dots,x_n$ and $p$.

The moduli space of classical vacua \textendash that is, the special field configurations of minimal
energy\textendash~for this theory is
$$
\mathscr{M}_{\mathrm{vac}}=V^{-1}(0)/\uU(1).
$$
The quotient by $\uU(1)$ comes from the gauge symmetry. So we need to set $V=0$ and divide by $\mathrm{U}(1)$. Thanks to the form of the potential, this requires that $D=0$, and either $\sigma=0$ or $ \sum_i |x_i|^2 + n^2 |p|^2=0$.  Now,
setting $D=0$ and dividing by $\mathrm{U}(1)$ is the familiar mathematical operation of symplectic reduction, in which $D=0$
defines a level set for the moment map of the $\mathrm{U}(1)$-action (with the choice of $r$ specifying the level). There is
another mathematical interpretation of this process, as a quotient in the sense of GIT: we complexify the group $\mathrm{U}(1)$ to
$\Cx^{\times}$ and consider the action of $\Cx^{\times}$ on $Z=\Cx^{n+1}$ with the same weights as before (the $x_i$'s have weight $1$ and $p$ has
weight $-n$). 

It turns out that there are two possible GIT quotients depending upon the sign of $r$. For $r>0$, $D=0$ implies that not all $x_i$ can vanish and thus $\sigma$ must be zero. The variable $p$ is free as long as the condition $D=0$ is satisfied. Owing to these, the quotient can be interpreted as the total space $Y=\utot(\Os_{\Px^{n-1}}(-n))$ of the line bundle
$\Os_{\Px^{n-1}}(-n)$ ($p$ serves as a fiber coordinate). For $r<0$, vanishing of the D-term requires that $p \neq 0$. We can therefore use the $\Cx^{\times}$-action on $(x_i,p)$ to set $p=1$. This leaves a residual invariance under the subgroup $G=\Zx_n$ on $\uU(1)$ (because $p$ has charge $-n$). Thus, the quotient is $\Cx^n/G$. This will therefore be what is known as an {\it orbifold} theory.

Let us note that $r$ determines the \textquotedblleft size\textquotedblright~of the non-compact Calabi-Yau manifold $Y$. In this sense, the variable $r$ can be thought of as determining the K\"{a}hler modulus of the theory. Geometrically, taking $r \fl 0$ corresponds to blowing-down the $\Px^{n-1}$ at the base of the line bundle $\Os_{\Px^{n-1}}(-n)$ and the geometry becomes isomorphic to $\Cx^n/G$. 

The real K\"{a}hler modulus $r$ is complexified by the $\theta$-angle of the gauged linear sigma model (which becomes the B-field in string theory) through the combination $\frac{\theta}{2 \pi}+\ui r$, and the complexified K\"{a}hler moduli space has two phases. When $r \gg 0$ the infrared fixed point of the gauged linear sigma model is a nonlinear sigma model on the target space $Y$ and this is called the Calabi-Yau phase. The phase $r \ll 0$ corresponds formally to an analytic continuation to negative K\"{a}hler class. For $\Os_{\Px^{n-1}}(-n)$ this means \textquotedblleft negative size\textquotedblright~of the $\Px^{n-1}$, i.e., we pass to the blow-down phase where the $\Px^{n-1}$ has been collapsed to a point, and the target is $\Cx^n/G$. The singularity at $r=0$ can be avoided by turning on a non-zero $\theta$-angle.

We are particularly interested in trying to understand D-branes (in particular, D-branes with B-type boundary conditions) in gauged linear sigma models with boundary. In the Calabi-Yau phase the category of D-branes is $\Dd(Y)$, the derived category of coherent sheaves on $Y$. In the orbifold phase, this should be replaced by the derived category $\Dd^G(\Cx^n)$ of $G$-equivariant sheaves on $\Cx^n$. We can try to use the boundary gauged linear sigma model as a tool to \textquotedblleft flow\textquotedblright~the category $\Dd^G(\Cx^n)$ to the category $\Dd(Y)$, thus realizing the equivalence of the two categories by means of a physical system. Thus D-branes give a completely natural explanation of the McKay correspondence in terms of the interpolation between small and large \textquotedblleft volume\textquotedblright~phase of a gauged linear sigma model with boundary. 

Now it is time to supplement the gauged linear sigma model by a superpotential $W\colon Z \fl \Cx$. It must be a holomorphic function on $Z=\Cx^{n+1}$. We are chiefly interested in superpotentials of the form $W=pf(x_1,\dots,x_n)$, where $f$ is a general homogeneus polynomial of degree $d$. The potential energy for this linear sigma model is  
$$
V=\frac{1}{2 e^2}D^2+|f|^2+|p|^2|\ud f|^2+|\sigma|^2 \Big( \sum_{i=1}^n |x_i|^2 + n^2 |p|^2  \Big).
$$       
Let us restrict attention to polynomials that are {\it transverse}, meaning that the equations $f=\ud f=0$ have no simultaneous solutions except at the origin. This implies that the hypersurface $S$ of
$\Px^{n-1}$ defined by $f=0$ is a smooth complex manifold. Moreover, if $d=n$ then $S$ is a Calabi-Yau
manifold. We will assume this in the sequel.

Let us analyse the spectrum of the classical theory. As before, the structure of the moduli space of classical vacua is different for $r>0$ and $r<0$, and we will treat these two cases separately.

First, let us take $r>0$. In this case, $D=0$ requires at least one $x_i$ to be nonzero, forcing $\sigma$ to vanish. If we assume $p \neq 0$, the equations $f=\ud f=0$ with the transversality condition imply that all $x_i$ must vanish. However, this is inconsistent with $D=0$. Thus $p$ must be zero. Our equations for classical vacua become $p=0$, $\sum_{i} |x_i|^2=r$, and $f=0$, and we must divide by the action of the gauge group $\uU(1)$. This gives the hypersurface $S$ defined by the equation $f=0$ in $\Px^{n-1}$, with K\"{a}hler modulus $r$. Thus, classically our theory can be described as a nonlinear sigma model whose target space is this hypersurface $S$.

Let us move to the case $r<0$. The space of classical vacua satisfies $x_i=0$ and $n|p|^2=-r$. We can use a gauge transformation to fix $p=\sqrt{-r/n}$, leaving a residual gauge invariance of $G=\Zx_n$. The local description of the theory is this: for $r \ll 0$, the field $P$ has a large mass and can be integrated out, leaving an effective theory of $n$ massless chiral superfields $X_1,\dots,X_n$ with an effective interaction
$$
W_{\mathrm{eff}}=\mathrm{const} \cdot f(x_1,\dots,x_n).
$$
Such a theory of $n$ massless fields with a polynomial interaction is called a Landau-Ginzburg model. We should notice, however, that the Landau-Ginzburg model is not an ordinary one, but a $G$-gauge theory. Physical fields must be invariant under the $G$-action, and the configuration must be single-valued only up to the $G$-action. Such a gauge theory is usually called a Landau-Ginzburg orbifold. 

In this way, the gauged linear sigma model interpolates between the Landau-Ginzburg orbifold and the Calabi-Yau nonlinear sigma model. These two regions can be considered as a sort of analytic continuation of each other. 

In both these theories we know how to describe topological D-branes. In the Calabi-Yau phase the D-brane category is the derived category $\Dd(S)$ of coherent sheaves on $S$. In the Landau-Ginzburg phase, D-branes are realized as $G$-equivariant matrix factorizations of $f$.  Using the gauged linear sigma model realization, the previous discussion naturally leads to the statement that there should be an equivalence of categories $\Dd(S) \cong \uMF^G(f)$, where $\uMF^G(f)$ is the category of $G$-equivariant matrix factorizations of $f$. 

Now, we can consider $Y=\utot(\Os_{\Px^{n-1}}(-n))$ as a Landau-Ginzburg model with superpotential $g$ given by the pullback of $f$ to $Y$. As mentioned in the introduction, in this case the category of D-branes is defined as the category of singularities $\Dd_{\Sg}(Y_0)$, where $Y_0$ is the fiber of $g$ over $0$. 

On the other hand, we can describe $Y$ as a GIT quotient of an affine space $Z=\Cx^{n+1}$ by the linear action of $\Cx^{\times}$. The underlying superpotential $W=p f(x_1,\dots,x_n)$ on $Z=\Cx^{n+1}$ descends to a holomorphic function on $Y$ that coincides with $g$. In the presence of a $\Cx^{\times}$-action one can also consider the category $\uMF^{\gr}(W)$ of graded matrix factorizations of $W$. We can think of the latter as being the category of D-branes in the gauged linear sigma model.

Now we reach the crucial step. One of the main outcomes of \cite{HHP08}, is that the categories of D-branes in the Calabi-Yau and Landau-Ginzburg phases are both quotients of $\uMF^{\gr}(W)$. However, at $r >0$ and at  \textquotedblleft intermidiate energy scale\textquotedblright~one could always choose the description as the Landau-Ginzburg model with superpotential $g$ over $Y$.  This superpotential gives masses to the field $P$ and to the \textquotedblleft transverse modes\textquotedblright~to the hypersurface $S$. At \textquotedblleft lower energies\textquotedblright, it is more appropriate to integrate them out, and we have the nonlinear sigma model on $S$.

In the light of all this we can expect that the categories of D-branes $\Dd_{\Sg}(Y_0)$ and $\uMF^G(f)$ are also equivalent.
Now, our Theorem~\ref{branes-singularities} gives an equivalence between the category of D-branes $\uMF^G(f)$ and the $G$-equivariant category of singularities $\Dd_{\Sg}^G(M_0)$, where $M_0$ is the fiber of $f$ over $0$. So, we arrive at the statement that the category $\Dd_{\Sg}(Y_0)$ should be equivalent to the category $\Dd_{\Sg}^G(M_0)$. This equivalence allows us to compare the category of D-branes on the Landau-Ginzburg model $(Y,g)$ with the category of D-branes in the Landau-Ginzburg orbifold $(M,f)$. Given this simple observation, it is natural to think that the correspondence between D-branes in the two theories is given by a McKay correspondence.  

 \section{Localization in triangulated categories}\label{sec1}
In this section we will review the definition of localization of triangulated categories. The reader is referred to \cite{GM96}, for example, for a more complete discussion.

Recall that a triangulated category $\mathscr{D}$ is an additive category equipped with the additional data:
\begin{enumerate}
\item[(a)]an additive autoequivalence $T\colon  \mathscr{D} \fl \mathscr{D}$, which is called a translation functor,
\item[(b)]a class of exact (or distinguished) triangles
$$
\xymatrix@C-1mm{X \ar[r]^-{u} & Y \ar[r]^-{v} & Z \ar[r]^-{w} & TX.}
$$
\end{enumerate}
This data must satisfy a certain set of axioms (see \cite{GM96}, also \cite{H66}).

An additive functor $F\colon  \mathscr{D} \fl \mathscr{D}'$ between two triangulated categories $\mathscr{D}$ and $\mathscr{D}'$ is called {\it exact} if it commutes with the translation functors, i.e.~there is a natural isomorphism $FT \cong TF$, and it
sends exact triangles to exact triangles, i.e.~any exact triangle $X \fl Y \fl Z
\fl TX$ in $\mathscr{D}$ is mapped to an exact triangle
$$
\xymatrix@C-1mm{FX \ar[r] & FY \ar[r] & FZ\ar[r] & FTX}
$$
in $\mathscr{D}'$, where $FTX$ is identified with $TFX$ via the natural isomorphism of $FT$ and $TF$.

A full additive subcategory $\mathscr{N} \subset \mathscr{D}$ is said to be a full triangulated subcategory, if the
following condition holds: it is closed with respect to the translation functor in $\mathscr{D}$ and if it contains
any two objects of an exact triangle in $\mathscr{D}$ then it contains the third object of this triangle as well.

With any pair $\mathscr{N} \subset \mathscr{D}$, where $\mathscr{N}$ is a full triangulated subcategory in a
triangulated category $\mathscr{D}$, we can associate the {\it quotient} $\mathscr{D}/\mathscr{N}$. To construct
it denote by $\Sigma$ a class of morphisms $s$ in $\mathscr{D}$ fitting into an exact triangle
$$
\xymatrix@C-1mm{X \ar[r]^-{s} &  Y \ar[r] & N \ar[r] & TX}
$$
with $N \in \mathscr{N}$. It is not hard to see that $\Sigma$ is a multiplicative system. We then define the quotient
$\mathscr{D}/\mathscr{N}$ as the localization $\mathscr{D}[\Sigma^{-1}]$ and observe that it is a triangulated category. The
translation functor on $\mathscr{D}/\mathscr{N}$ is induced from the translation functor in the category
$\mathscr{D}$, and the exact triangles in $\mathscr{D}/\mathscr{N}$ are triangles isomorphic to the images of exact
triangles.

The category $\mathscr{D}/\mathscr{N}$ has the following explicit description. The objects of
$\mathscr{D}/\mathscr{N}$ are the objects of $\mathscr{D}$.  The
morphisms from $X$ to $Y$ are equivalence classes of diagrams $(s,f)$ in $\mathscr{D}$ of the form
$$
\xymatrix@C-1mm{X  & Y' \ar[r]^-{f} \ar[l]_-{s}& Y } \quad \text{with $s \in \Sigma$,}
$$ 
where two diagrams $(s,f)$ and $(t,g)$ are equivalent if they fit into a commutative diagram
$$
\xymatrix{& Y' \ar[rd]^-{f} \ar[ld]_-{s}& \\
X   & Y''' \ar[r]^-{h} \ar[l]_-{r} \ar[u] \ar[d] & Y   \\
& Y''  \ar[lu]^-{t} \ar[ru]_-{g}& }
$$
with $r \in \Sigma$.

The quotient functor $Q\colon \mathscr{D} \fl \mathscr{D}/\mathscr{N}$ annihilates $\mathscr{N}$. Moreover, any exact
functor $F\colon  \mathscr{D} \fl \mathscr{D}'$ between triangulated categories, for which $F(X) \cong 0$ when $X \in
\mathscr{N}$, factors uniquely through $Q$. This implies the following result which will be useful later.

\begin{lem}\label{lem3.1}
Let $\mathscr{N}$ and $\mathscr{N}'$ be full triangulated subcategories of triangulated categories $\mathscr{D}$ and $\mathscr{D}'$, respectively. Let $F\colon \mathscr{D} \fl \mathscr{D}'$ and $G\colon \mathscr{D}' \fl \mathscr{D}$ be an adjoint pair of exact functors such that $F(\mathscr{N}) \subset \mathscr{N}'$ and $G(\mathscr{N}') \subset \mathscr{N}$. Then they induce functors
$$
\overline{F}\colon  \mathscr{D}/\mathscr{N} \lfl \mathscr{D}'/\mathscr{N}' \quad \text{and} \quad \overline{G}\colon  \mathscr{D}'/\mathscr{N}' \lfl \mathscr{D}/\mathscr{N}
$$
which are adjoint as well. Moreover, if the functor $F\colon  \mathscr{D} \fl \mathscr{D}'$ is fully faithful, then the functor $\overline{F}\colon  \mathscr{D}/\mathscr{N} \fl \mathscr{D}'/\mathscr{N}'$ is also fully faithful.
\end{lem}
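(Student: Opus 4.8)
The plan is to construct $\overline{F}$ and $\overline{G}$ from the universal property of the quotient functors recalled just above, and then to transport the unit and counit of the adjunction $F \dashv G$ (so $F$ is left adjoint to $G$) down to the quotient categories. Concretely, the composite $Q' F \colon \mathscr{D} \fl \mathscr{D}'/\mathscr{N}'$ is exact, being a composite of exact functors, and $F(\mathscr{N}) \subset \mathscr{N}'$ forces $Q'F(X) \cong 0$ for every $X \in \mathscr{N}$; hence there is a unique exact functor $\overline{F}$ with $\overline{F}Q = Q'F$, and symmetrically a unique exact $\overline{G}$ with $\overline{G}Q' = QG$.

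Next I would push the adjunction data through. Write $\eta \colon \id_{\mathscr{D}} \Rightarrow GF$ and $\epsilon \colon FG \Rightarrow \id_{\mathscr{D}'}$ for the unit and counit. Since the objects of $\mathscr{D}/\mathscr{N}$ are literally those of $\mathscr{D}$ and $Q$ is the identity on objects, the morphisms $Q\eta_X \colon X \fl \overline{G}\,\overline{F}X$ are the candidate components of a natural transformation $\overline{\eta} \colon \id_{\mathscr{D}/\mathscr{N}} \Rightarrow \overline{G}\,\overline{F}$. The only thing to check is naturality, and for that it suffices to test against a generating set of morphisms of $\mathscr{D}/\mathscr{N}$ — the images $Qf$ of morphisms of $\mathscr{D}$ and the formal inverses $(Qs)^{-1}$ of images of $s \in \Sigma$. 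Naturality against $Qf$ is the naturality of $\eta$; naturality against $Qs$ holds for the same reason, and naturality against an isomorphism automatically yields naturality against its inverse. So $\overline{\eta}$ is well defined, with $\overline{\eta}_{QX} = Q\eta_X$, and likewise $Q'\epsilon$ descends to $\overline{\epsilon}\colon \overline{F}\,\overline{G} \Rightarrow \id_{\mathscr{D}'/\mathscr{N}'}$ with $\overline{\epsilon}_{Q'Y'} = Q'\epsilon_{Y'}$.

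Applying $Q'$ to the triangle identity $(\epsilon F)\circ(F\eta)=\id_F$ and $Q$ to $(G\epsilon)\circ(\eta G)=\id_G$, and using $\overline{F}Q=Q'F$, $\overline{G}Q'=QG$ together with the displayed formulas for $\overline{\eta}$ and $\overline{\epsilon}$, turns these into $\overline{\epsilon}\,\overline{F}\circ\overline{F}\,\overline{\eta}=\id_{\overline{F}}$ and $\overline{G}\,\overline{\epsilon}\circ\overline{\eta}\,\overline{G}=\id_{\overline{G}}$, checked on objects of the form $QX$ resp.\ $Q'Y'$ — but that is every object. Hence $\overline{F}\dashv\overline{G}$ with unit $\overline{\eta}$ and counit $\overline{\epsilon}$. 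For the last assertion I would invoke the standard fact that for an adjoint pair $F\dashv G$ the functor $F$ is fully faithful precisely when the unit $\eta$ is a natural isomorphism. If $F$ is fully faithful, each $\eta_X$ is invertible in $\mathscr{D}$, so each $\overline{\eta}_{QX}=Q\eta_X$ is invertible in $\mathscr{D}/\mathscr{N}$, i.e.\ $\overline{\eta}$ is a natural isomorphism. The map on morphisms induced by $\overline{F}$ then factors as
$$
\uHom_{\mathscr{D}/\mathscr{N}}(X,Y)\ \xrightarrow{\ \overline{\eta}_Y\circ(-)\ }\ \uHom_{\mathscr{D}/\mathscr{N}}(X,\overline{G}\,\overline{F}Y)\ \xrightarrow{\ \sim\ }\ \uHom_{\mathscr{D}'/\mathscr{N}'}(\overline{F}X,\overline{F}Y),
$$
the first arrow being bijective because $\overline{\eta}_Y$ is invertible and the second being the adjunction bijection; hence $\overline{F}$ is fully faithful.

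I expect the one genuinely delicate point to be the descent of $\eta$ and $\epsilon$: the temptation is to compute $\uHom$-sets in $\mathscr{D}/\mathscr{N}$ through roofs, whereas the efficient route is to observe that naturality with respect to the formally inverted morphisms is automatic. It is also worth stressing that full faithfulness of $\overline{F}$ genuinely uses the adjunction — a localization functor does not preserve full faithfulness in general — the point being that, for an adjoint pair, full faithfulness is exactly the condition that the unit is an isomorphism, which is preserved by any functor.
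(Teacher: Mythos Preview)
The paper does not actually prove this lemma: it is stated immediately after the universal property of the quotient functor with the phrase ``This implies the following result which will be useful later,'' and no further argument is given. Your proof is correct and supplies exactly the details the paper omits --- constructing $\overline{F}$ and $\overline{G}$ via the universal property, descending the unit and counit by checking naturality on the generating morphisms of the localization, verifying the triangle identities objectwise, and then invoking the characterization of full faithfulness of a left adjoint via invertibility of the unit. Your closing remark is also well taken: the adjunction is genuinely needed for the full faithfulness clause, since localization of a fully faithful functor need not be fully faithful in general.
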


\section{Triangulated categories of singularities}\label{sec3}
In this section we give the definition and basic properties of triangulated categories of singularities. We refer to
Orlov's papers \cite{O105} and \cite{O205} for all the proofs of the assertions below.

We are mainly interested in triangulated categories and their quotient by triangulated subcategories which are
coming from algebraic geometry. Let $X$ be a separated Noetherian scheme of finite Krull dimension over $\Cx$ such that the category of coherent sheaves $\uCoh(X)$ has enough locally free sheaves. For future reference we denote the category of quasi-coherent sheaves on $X$ by $\uQcoh(X)$. 

Denote by $\Dd(X)$ the bounded derived category of coherent sheaves on $X$. The objects of the category $\Dd(X)$
which are isomorphic to bounded complexes of locally free sheaves on $X$ form a full triangulated subcategory. It is
called the subcategory of {\it perfect complexes} and is denoted by $\uPerf(X)$.\footnote{Actually, a perfect
complex is defined as a complex of $\Os_X$-modules locally quasi-isomorphic to a bounded complex of locally free
sheaves of finite type. But under our assumption on the scheme any such complex is quasi-isomorphic to a bounded
complex of locally free sheaves of finite type (see \cite{TT90}).}

\begin{dfn}
Define the triangulated category of singularities $\Dd_{\Sg}(X)$ of $X$ as the quotient category
$\Dd(X)/\uPerf(X)$.
\end{dfn}

It is known that if our scheme $X$ is regular then the subcategory of perfect complexes $\uPerf(X)$ coincides with
the whole bounded derived category of coherent sheaves. In this case the triangulated category of singularities
$\Dd_{\Sg}(X)$ is trivial. Thus $\Dd_{\Sg}(X)$ is only sensitive to singularities of $X$.

Let $f\colon X \fl Y$ be a morphism of finite $\mathrm{Tor}$-dimension (for example a flat morphism or a regular closed embedding). It
defines the inverse image functor $\Ld f^*\colon  \Dd(Y) \fl \Dd(X)$. It is clear that the functor $\Ld f^*$ sends perfect complexes on
$Y$ to perfect complexes on $X$. Therefore, the functor $\Ld f^*$ induces an exact functor $\Ld
\overline{f}^*\colon \Dd_{\Sg}(Y) \fl \Dd_{\Sg}(X)$.

Suppose, in addition, that the morphism $f\colon X \fl Y$ is proper and locally of finite type. Then the direct image functor $\Rd
f_*\colon \Dd(X) \fl \Dd(Y)$ takes perfect complexes on $X$ to perfect complexes on $Y$ (see \cite{TT90}). Hence it determines a
functor $\Rd \overline{f}_*\colon \Dd_{\Sg}(X) \fl \Dd_{\Sg}(Y)$ which is right adjoint to $\Ld
\overline{f}^*$. We should remark, however, that all the specific morphisms we consider are non-proper.

A fundamental property of triangulated categories of singularities is a property of locality. Here is a precise statement. 

\begin{prop}\label{locality}
Let $X$ be as above and let $j\colon  U \fl X$ be an embedding of an open subscheme such that $\uSing(X) \subset U$. Then the
functor $\overline{j}^*\colon \Dd_{\Sg}(X) \fl \Dd_{\Sg}(U)$ is an equivalence of triangulated categories.
\end{prop}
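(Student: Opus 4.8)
The plan is to exhibit $\overline{j}^*\colon \Dd_{\Sg}(X) \to \Dd_{\Sg}(U)$ as both essentially surjective and fully faithful, using the fact (recalled in Section~\ref{sec1}) that a morphism in the Verdier quotient $\Dd(X)/\uPerf(X)$ becomes an isomorphism precisely when its cone is perfect. The key input is that $j\colon U \to X$ is an open immersion, hence flat, so $\Ld j^* = j^*$ is exact, sends $\uPerf(X)$ into $\uPerf(U)$, and therefore descends to $\overline{j}^*$ by the universal property of localization; moreover $j^*$ is itself a localization functor on the level of derived categories, with kernel the complexes supported on the closed complement $Z = X \setminus U$. Since $\uSing(X) \subset U$, the scheme $X$ is regular in a neighbourhood of $Z$, and this regularity is what will make everything collapse.

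For essential surjectivity: every object of $\Dd_{\Sg}(U)$ is represented by a coherent sheaf $\Fcal$ on $U$ (up to shift, one reduces to a single sheaf in the category of singularities by the standard truncation argument of Orlov \cite{O105}), and since $j$ is an open immersion one can extend $\Fcal$ to a coherent sheaf $\widetilde{\Fcal}$ on $X$ with $j^*\widetilde{\Fcal} \cong \Fcal$; then $\overline{j}^*$ applied to the class of $\widetilde{\Fcal}$ gives the class of $\Fcal$. For fullness and faithfulness one argues as follows. Given objects $A, B \in \Dd(X)$, a morphism $A \to B$ in $\Dd_{\Sg}(U)$ after applying $\overline{j}^*$ is a roof $j^*A \xleftarrow{s} C' \to j^*B$ with $\mathrm{cone}(s) \in \uPerf(U)$. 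Because $j^*\colon \Dd(X) \to \Dd(U)$ is a Verdier localization, the roof lifts to a roof $A \xleftarrow{\widetilde s} \widetilde{C} \to B$ in $\Dd(X)$ whose cone $N := \mathrm{cone}(\widetilde s)$ satisfies $j^*N \in \uPerf(U)$; it remains to see that $N$ is \emph{already} perfect on $X$. This is where the hypothesis bites: $N$ is a bounded complex of coherent sheaves on $X$, perfect on the open set $U \supseteq \uSing(X)$, hence perfect on a neighbourhood of every singular point, and also perfect on the regular locus automatically — so $N \in \uPerf(X)$ by the local nature of perfection (a bounded complex of coherent sheaves that is perfect in a neighbourhood of every point is perfect, as regularity of the local ring forces finite Tor-dimension stalkwise, and one globalizes). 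Hence $\widetilde s$ is an isomorphism in $\Dd_{\Sg}(X)$, and the roof defines a genuine morphism $A \to B$ in $\Dd_{\Sg}(X)$ mapping to the given one; an identical cone-is-perfect argument run on the difference of two lifts shows the map on Hom-sets is injective.

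The main obstacle is the lifting step: one must genuinely use that $j^*\colon \Dd^b(\uCoh X) \to \Dd^b(\uCoh U)$ realizes the target as the Verdier quotient of the source by the subcategory $\Dd^b_Z(\uCoh X)$ of complexes acyclic on $U$ — i.e. that every object of $\Dd^b(\uCoh U)$ extends and every morphism is a fraction of extended morphisms. For quasi-coherent sheaves this is classical; the coherent, bounded version requires knowing that restrictions and extensions of coherent sheaves along an open immersion behave well and that $Z$-supported complexes form the kernel, which holds on a Noetherian scheme of finite Krull dimension with enough locally free sheaves — exactly the standing hypotheses. Once this localization statement is in hand, the rest is the bookkeeping with roofs sketched above, and the only substantive geometric content is the elementary remark that ``perfect away from $\uSing(X)$'' plus ``automatically perfect on the regular locus'' equals ``perfect.''
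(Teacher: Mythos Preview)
The paper does not actually prove Proposition~\ref{locality}; at the start of Section~\ref{sec3} it explicitly defers all proofs in that section to Orlov's papers \cite{O105,O205}. So there is no ``paper's own proof'' to compare against --- the relevant benchmark is Orlov's argument (Proposition~1.14 in \cite{O105}).

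Your outline is correct and is essentially Orlov's proof, phrased by hand with roofs rather than via the abstract ``third isomorphism theorem'' for Verdier quotients. Orlov's organization is: (i) $j^*\colon \Dd(X)\to\Dd(U)$ is a Verdier localization with kernel $\Dd_Z(X)$, the complexes with cohomology supported on $Z=X\setminus U$; (ii) since $Z$ lies in the regular locus, $\Dd_Z(X)\subset\uPerf(X)$; (iii) an object of $\Dd(X)$ whose restriction to $U$ is perfect is already perfect on $X$, by locality of perfection and regularity on $X\setminus U$; hence (iv) $\Dd_{\Sg}(X)=\Dd(X)/\uPerf(X)\cong\big(\Dd(X)/\Dd_Z(X)\big)/\big(\uPerf(X)/\Dd_Z(X)\big)\cong\Dd(U)/\uPerf(U)=\Dd_{\Sg}(U)$. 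Your essential surjectivity and fullness paragraphs unpack exactly (i)--(iii), and the ``cone-is-perfect'' step is precisely (iii).

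One small caution: your faithfulness sketch (``an identical cone-is-perfect argument run on the difference of two lifts'') is a little too compressed. What one actually uses is that a morphism in $\Dd(X)$ becomes zero in the localization $\Dd(U)$ iff it is killed after precomposing with some $s$ whose cone lies in $\Dd_Z(X)$; that cone is then perfect by (ii), and one concludes vanishing already in $\Dd_{\Sg}(X)$. This is not hard, but it is not \emph{literally} the same computation as the fullness step, so it is worth one extra sentence if you write this up.
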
 

Triangulated categories of singularities of $X$ have additional good properties in case the scheme is Gorenstein. Recall that
a local Noetherian ring $A$ is called Gorenstein if $A$ as module over itself has a finite injective resolution. It can be shown
that if $A$ is Gorenstein then $A$ has finite injective dimension and the natural map
$$
M \lfl \Rd \uHom\spdot_A(\Rd\uHom\spdot_A(M,A),A)
$$ 
is an isomorphism for any finitely generated $A$-module $M$ and, as a consequence, for any object from $\Dd(\uSpec A)$. A
scheme $X$ is Gorenstein if all of its local rings are Gorenstein local rings. If $X$ is Gorenstein and has finite dimension,
then $\Os_X$ is a dualizing complex for $X$, i.e.~it has finite injective dimension as a quasi-coherent sheaf and the natural map
$$
\Es \lfl \Rd \Hs om\spdot_{X}(\Rd \Hs om\spdot_{X}(\Es,\Os_X),\Os_X)
$$ 
is an isomorphism for any coherent sheaf $\Es$. In particular, there is an integer $n_0$ such that $\Es xt_X^i(\Es,\Os_X)=0$ for
each quasi-coherent sheaf $\Es$ and all $i>n_0$.

The following gives a useful description of the morphism spaces in triangulated categories of singularities.

\begin{prop}\label{prop2.3}
Let $X$ be as above and Gorenstein. Let $\Es$ and $\Fs$ be coherent sheaves such that $\Es xt_X^i(\Es,\Os_X)=0$ for all $i>0$.
Fix $n$ such that $\Es xt_X^i(\Ss,\Fs)=0$ for $i >n$ and for any locally free sheaf $\Ss$. Then
$$
\uHom_{\Dd_{\Sg}(X)}(\Es,\Fs[n])\cong \uExt_X^n(\Es,\Fs)/\Rs
$$
where $\Rs$ is the subspace of elements factoring through locally free, i.e. $e \in \Rs$ if and only if $e=\alpha \beta$ with
$\alpha\colon  \Es \fl \Ss$ and $\beta \in \uExt_X^n(\Ss,\Fs)$ where $\Ss$ is locally free.
\end{prop}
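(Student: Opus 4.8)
The plan is to exhibit the isomorphism as the descent of the map induced on $\uHom$-groups by the Verdier quotient functor $Q\colon\Dd(X)\to\Dd_{\Sg}(X)$. Identifying $\uExt_X^n(\Es,\Fs)$ with $\uHom_{\Dd(X)}(\Es,\Fs[n])$, an element of $\Rs$ is a composite $\Es\xrightarrow{\alpha}\Ss\xrightarrow{\beta}\Fs[n]$ in $\Dd(X)$ with $\Ss$ locally free; since $Q(\Ss)=0$ this gives $Q(\beta\alpha)=Q(\beta)Q(\alpha)=0$, so $Q$ annihilates $\Rs$ and descends to
$$
\overline Q\colon\uExt_X^n(\Es,\Fs)/\Rs\lfl\uHom_{\Dd_{\Sg}(X)}(\Es,\Fs[n]),
$$
and the task is to prove $\overline Q$ bijective.

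The first step records the homological input coming from the hypotheses. Because $\uCoh(X)$ has enough locally free sheaves, fix a locally free resolution $\cdots\to\Ss_1\to\Ss_0\to\Es\to0$ and set $\Kcal_j=\uker(\Ss_{j-1}\to\Ss_{j-2})$, $\Kcal_0=\Es$. Each exact sequence $0\to\Kcal_{j+1}\to\Ss_j\to\Kcal_j\to0$ is an exact triangle with perfect middle term; brutally truncating after $n$ steps gives an exact triangle $\Kcal_n[n-1]\to\Ps\to\Es\xrightarrow{\gamma}\Kcal_n[n]$ with $\Ps=[\Ss_{n-1}\to\cdots\to\Ss_0]$ perfect, so $\gamma$ lies in the multiplicative system defining $\Dd_{\Sg}(X)$, $Q(\gamma)$ is invertible, and $\uHom_{\Dd_{\Sg}(X)}(\Es,\Fs[n])\cong\uHom_{\Dd_{\Sg}(X)}(\Kcal_n,\Fs)$. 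The hypothesis $\Es xt_X^i(\Es,\Os_X)=0$ for $i>0$ propagates through the long exact sequences to every syzygy, so each $\Kcal_j$ is maximal Cohen--Macaulay; and by Gorenstein biduality the duals $\Kcal_j^\vee=\Hs om_X(\Kcal_j,\Os_X)$ are maximal Cohen--Macaulay as well, with $\Kcal_j^{\vee\vee}=\Kcal_j$ and with locally free resolutions of $\Kcal_j^\vee$ dualising to locally free coresolutions of $\Kcal_j$. These are the facts that control the complexes entering the fractions below.

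Using them one analyzes the Verdier fractions computing $\uHom_{\Dd_{\Sg}(X)}(\Es,\Fs[n])$, to show on the one hand that $\overline Q$ is surjective, and on the other that its kernel is $0$. For surjectivity one represents a morphism by a roof $\Es\xleftarrow{s}\Zcal\xrightarrow{g}\Fs[n]$ with $T=\mathrm{cone}(s)$ perfect; using that $\Es$ is maximal Cohen--Macaulay, one refines $s$ — replacing it by its composite with a suitable further morphism of the multiplicative system, built from the resolutions and coresolutions above — until the obstruction to straightening the roof, which lives in a subquotient, in cohomological degree $n+1$, of the groups $\uExt_X^{\ast}(T^i,\Fs)$ assembled from the locally free terms $T^i$ of $T$, is killed; here the numerical hypothesis $\Es xt_X^i(\Ss,\Fs)=0$ for $i>n$ is exactly what makes that subquotient vanish, and then $g=e\circ s$ for a genuine morphism $e\colon\Es\to\Fs[n]$ of $\Dd(X)$, so the given morphism equals $\overline Q$ of the class of $e$. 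For the kernel one uses that a morphism of $\Dd(X)$ vanishes in $\Dd_{\Sg}(X)$ precisely when it factors through an object of $\uPerf(X)$: if $e\in\uExt_X^n(\Es,\Fs)$ factors through a perfect complex $T$ in $\Dd(X)$, then, exploiting again that $\Es$ is maximal Cohen--Macaulay, that $\Es$ and $\Fs$ are sheaves, and the constraint on $n$, one reduces $T$ to a single locally free sheaf, exhibiting $e$ as an element of $\Rs$.

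The step I expect to be the main obstacle is this last analysis of fractions — in effect Buchweitz's computation of morphism spaces in the singularity category of a Gorenstein scheme. The formal manipulations (exact triangles, the octahedral axiom, the calculus of Verdier fractions) are routine; the substance lies in controlling the homological algebra of the complexes that appear in the roofs, which is exactly what the Gorenstein hypothesis on $X$ is there to provide — finiteness of the injective dimension of $\Os_X$ forces high syzygies to be maximal Cohen--Macaulay, and biduality allows one to dualise resolutions — together with the numerical hypothesis on $n$, which guarantees that $\uExt_X^n(\Es,\Fs)$ already sees the relevant stable invariant.
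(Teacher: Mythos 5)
First, a point of reference: the paper does not actually prove Proposition~\ref{prop2.3} --- Section~\ref{sec3} explicitly defers all proofs to Orlov's papers \cite{O105,O205}, where this is the statement computing morphisms in $\Dd_{\Sg}(X)$ for Gorenstein $X$. Measured against Orlov's argument, your proposal reproduces the correct overall shape: descend the quotient functor to $\overline Q\colon\uExt_X^n(\Es,\Fs)/\Rs\to\uHom_{\Dd_{\Sg}(X)}(\Es,\Fs[n])$ (this part is complete and correct), and prove bijectivity by analysing roofs, with $\Es xt_X^{>0}(\Es,\Os_X)=0$ controlling the cones and $\Es xt_X^{>n}(\Ss,\Fs)=0$ killing the obstruction.

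The gap is that the step you yourself flag as ``the main obstacle'' is the entire content of the proposition, and it is asserted rather than proved. Everything reduces to the following lemma, which is the heart of Orlov's proof: \emph{if $\Es xt_X^i(\Es,\Os_X)=0$ for $i>0$, then every morphism $\Es\to T$ in $\Dd(X)$ to a perfect complex $T$ factors through a locally free sheaf placed in degree $0$.} (Its proof is where the Gorenstein hypothesis does its work, via $\uHom_{\Dd(X)}(\Es,T)\cong\uHom_{\Dd(X)}(T^{\vee},\Es^{\vee})$ with $\Es^{\vee}=\Hs om_X(\Es,\Os_X)$ a sheaf.) Granting it, one replaces the roof $\Es\xleftarrow{s}\Zcal\xrightarrow{g}\Fs[n]$ by one whose cone is a single locally free sheaf $\Ss$, whereupon the obstruction to straightening lies in $\uHom(\Ss,\Fs[n+1])=\uExt_X^{n+1}(\Ss,\Fs)=0$; and for injectivity the same lemma converts ``$e$ factors through a perfect complex'' (which is what $\overline Q(e)=0$ means in a Verdier quotient) into ``$e$ factors through a locally free sheaf'', i.e.\ $e\in\Rs$. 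Without this lemma your description of the obstruction is incorrect as stated: for a general perfect cone $T$ the graded pieces of $\uHom_{\Dd(X)}(T,\Fs[n+1])$ are subquotients of $\uExt_X^{\,n+1+i}(T^i,\Fs)$, and for terms $T^i$ in degrees $i<0$ one has $n+1+i\le n$, so the hypothesis on $n$ does not kill them; arranging the cone to avoid such terms is precisely the non-formal step. The phrase ``one refines $s$ until the obstruction is killed'' names this manoeuvre but does not perform it, and likewise ``one reduces $T$ to a single locally free sheaf'' in the injectivity argument. A secondary remark: your syzygy reduction to $\uHom_{\Dd_{\Sg}(X)}(\Kcal_n,\Fs)$ is valid but is then abandoned, and note that on a non-affine $X$ one cannot identify $\uExt_X^n(\Es,\Fs)$ with $\uHom_X(\Kcal_n,\Fs)$ modulo maps factoring through $\Ss_{n-1}$, since $\uExt_X^i(\Ss_j,\Fs)$ need not vanish for $0<i\le n$; so that route, too, ultimately requires the factorization lemma above.
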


\section{Triangulated categories of matrix factorizations}\label{sec4}
In this section we introduce the category of matrix factorizations and give some of its basic properties. The origin of this category goes back to the work of D.~Eisenbud \cite{Eis80} in the context of so-called maximal Cohen-Macaulay modules over local rings of hypersurface singularities.

As proposed by M.~Kontsevich (see
also \cite{KL02}) the category of D-branes associated to a Landau-Ginzburg model can be characterized in terms of matrix factorizations. For us, a Landau-Ginzburg model is simply a pair $(X,W)$, where $X$ is a smooth variety (or regular scheme), and $W\colon X \fl \Cx$ is
a regular function on $X$ called the {\it superpotential}. To keep things simple, we will assume throughout that $W$ has a single critical value at the origin $0 \in \Cx$. To this data one can associate two categories: an exact category
$\uPair(W)$ and a triangulated category $\uMF(W)$. We give the construction of these categories under the condition
that $X$ is affine.

Let $A$ be a commutative algebra over $\Cx$. Then one can regard $A$ as the algebra of functions on an affine scheme
$X=\uSpec A$. Denote by $ \uMod A$ the category of all right modules over $A$. It is a well-known fact that the global section functor
$$
H^0\colon \uQcoh(X) \lfl \uMod A,
$$
is an equivalence with inverse denoted by $\widetilde{(-)}$. It is also well-known that this functor restrict to an equivalence
$$
H^0\colon \uCoh(X) \lfl \umod A,
$$
where $\umod A$ is the category of finitely generated right modules over $A$. Note that under this equivalence locally free sheaves are the same as projective modules.

For a non-zero element $W \in A$, a {\it matrix factorization} of $W$ is an ordered pair
$$
\overline{P}=\big( \xymatrix{P_0 \ar@<0.4ex>[r]^-{p_0} & P_1
 \ar@<0.4ex>[l]^-{p_1}}  \big)
$$ 
where $P_0$, $P_1$ are finitely generated projective $A$-modules and $p_0$, $p_1$ are $A$-ho\-mo\-mor\-phisms such that $p_1 p_0=W
\cdot \mathrm{id}_{P_0}$ and $p_0p_1=W \cdot \mathrm{id}_{P_1}$. Since $p_0p_1$ and $p_1p_0$ are $W$ times the identities, where
$W$ is a non-zero element of $A$, the rank of $P_0$ coincides with that of $P_1$. We call the rank the size of the matrix
factorization.

The above construction can be reformulated in terms of $\Zx_2$-graded $A$-modules as follows. A $\Zx_2$-graded $A$-module $P=P_0
\oplus P_1$ can be thought of as an ordinary $A$-module $P$ equipped with a $\Cx$-linear involution $\tau\colon  P \fl P$,
$\tau^2=\mathrm{id}$. The homogeneous parts $P_0$ and $P_1$ are the eigenspaces of $\tau$ corresponding to the eigenvalues $1$
and $-1$ respectively. A pair $\overline{P}$ can be similarly thought of as a triple $(P,\tau,D_P)$ where $D_P\colon P \fl P$ is an odd
$A$-homomorphism satisfying $D_P^2=W \cdot \mathrm{id}_P$. Given two matrix factorizations $\overline{P}=(P,\tau,D_P)$ and
$\overline{Q}=(Q,\sigma,D_Q)$ the $A$-module $\bHom(\overline{P},\overline{Q})$ form a $\Zx_2$-graded complex
$$
\bHom(\overline{P},\overline{Q})=\bHom(\overline{P},\overline{Q})_0\oplus
\bHom(\overline{P},\overline{Q})_1
$$
where
\begin{align*}
\bHom(\overline{P},\overline{Q})_0&=\uHom_A(P_0,Q_0)\oplus \uHom_A(P_1,Q_1), \\
\bHom(\overline{P},\overline{Q})_1&=\uHom_A(P_0,Q_1)\oplus \uHom_A(P_1,Q_0),
\end{align*}
and with differential $D$ acting on homogeneous elements of degree $k$ as
$$
D \phi=D_{Q} \cdot \phi - (-1)^k \phi \cdot D_P.
$$

The set of objects of the categories $\uPair(W)$ and $\uMF(W)$ is given by the set of matrix factorizations of $W$.
The space of morphisms $\uHom_{\uPair(W)}(\overline{P},\overline{Q})$ in the category $\uPair(W)$ is the
space of homogeneous morphisms of degree $0$ which commute with the differential $D$. The space of morphisms in the category
$\uMF(W)$ is the space of morphisms in $\uPair(W)$ modulo null-homotopic morphisms, i.e.
\begin{align*}
\uHom_{\uPair(W)}(\overline{P},\overline{Q})&=Z^0(\bHom(\overline{P},\overline{Q})),\\
\uHom_{\uMF(W)}(\overline{P},\overline{Q})&=H^0(\bHom(\overline{P},\overline{Q})).
\end{align*}
Thus a morphism $\phi\colon  \overline{P}\fl \overline{Q}$ in the category $\uPair(W)$ is a pair of morphisms $\phi_0\colon P_0 \fl
Q_0$ and $\phi_1\colon P_1 \fl Q_1$ such that $\phi_1 p_0=q_0 \phi_0$ and $q_1 \phi_1=\phi_0 p_1$. The morphism $\phi$ is
null-homotopic if there are two morphisms $t_0\colon P_0 \fl Q_1$ and $t_1\colon P_1 \fl Q_0$ such that $\phi_1=q_0 t_1+t_0 p_1$ and
$\phi_0=t_1 p_0 + q_1 t_0$.

It is clear that the category $\uPair(W)$ is an exact category with respect to componentwise monomorphisms and
epimorphisms (see definition in \cite{Qui73}).

The category $\uMF(W)$ can be endowed with a natural structure of a triangulated category. To determine it we have to
define a translation functor $[1]$ and a class of exact triangles.

The translation functor can be defined as a functor that takes $\overline{P}$ to the object
\begin{equation}\label{shift}
\overline{P}[1]=\big( \xymatrix{P_1 \ar@<0.4ex>[r]^-{-p_1} & P_0
 \ar@<0.4ex>[l]^-{-p_0}}  \big)
\end{equation}
i.e.~it changes the order of the modules and signs of the morphisms, and takes a morphism $\phi=(\phi_0,\phi_1)$ to the morphism
$\phi[1]=(\phi_1,\phi_0)$. We see that the functor $[2]$ is the identity functor. 

For any morphism $\phi\colon   \overline{P}\fl \overline{Q}$ from the category $\uPair(W)$ we define a mapping cone $C(\phi)$ as
an object
\begin{equation}\label{cone}
C(\phi)=\big( \xymatrix{Q_0 \oplus P_1 \ar@<0.4ex>[r]^-{c_0} & Q_1 \oplus P_0
 \ar@<0.4ex>[l]^-{c_1}}  \big)
\end{equation}
such that 
$$
c_0=\left( \begin{array}{cc} q_0 & \phi_1 \\
                                               0 & -p_1 \end{array}  \right), \quad c_1=\left( \begin{array}{cc}
      q_1 & \phi_0 \\
                                                0 & -p_0 \end{array}  \right).
$$
There are maps $\psi\colon  \overline{Q} \fl C(\phi)$, $\psi=(\mathrm{id},0)$ and $\xi\colon C(\phi) \fl \overline{P}[1]$,
$\xi=(0,\mathrm{id})$.

Now we define a standard triangle in the category $\uMF(W)$ as a triangle of the form
$$
\xymatrix@C-1mm{\overline{P} \ar[r]^-{\phi} & \overline{Q}  \ar[r]^-{\psi} & C(\phi)   \ar[r]^-{\xi} & \overline{P}[1]}
$$
for some $\phi \in \uHom_{\uPair(W)}(\overline{P},\overline{Q})$. A triangle $\overline{P} \fl \overline{Q} \fl
\overline{R} \fl \overline{P}[1]$ in $\uMF(W)$ will be called an exact triangle if it is isomorphic to a standard one.

As a consequence we get the following.

\begin{prop}
The category $\uMF(W)$ endowed with the translation functor $[1]$ and the above class of exact triangles becomes a
triangulated category.
\end{prop}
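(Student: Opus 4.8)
The plan is to verify the axioms of a triangulated category directly, following the classical template by which one shows that the homotopy category $K(\Acal)$ of an additive category $\Acal$ is triangulated (see \cite{GM96}, \cite{H66}), with the ordinary mapping cone replaced by the construction \eqref{cone}. That $[1]$ is an additive autoequivalence is immediate from \eqref{shift}: since $[2]=\id$ it is even an automorphism of $\uMF(W)$. For axiom (TR1) one notes that any $\phi\colon \overline{P}\fl\overline{Q}$ in $\uPair(W)$ is completed to the standard triangle $\overline{P}\fl\overline{Q}\fl C(\phi)\fl\overline{P}[1]$ by $\psi$ and $\xi$, and every morphism of $\uMF(W)$ lifts to $\uPair(W)$; the triangle on $\id_{\overline{P}}$ has cone $0$ because $C(\id_{\overline{P}})$ is contractible, which is checked by writing out the explicit odd homotopy $h$ with $Dh+hD=\id$ in block form; and closure under isomorphism is definitional. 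One preliminary lemma makes this well posed: homotopic morphisms have homotopy-equivalent cones, proved as in the complex case by using a chosen null-homotopy to build mutually inverse maps between the cones.

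Axiom (TR2) (rotation) is the first real computation: if $\overline{P}\xrightarrow{\phi}\overline{Q}\xrightarrow{\psi}C(\phi)\xrightarrow{\xi}\overline{P}[1]$ is standard, one must show the rotated triangle $\overline{Q}\xrightarrow{\psi}C(\phi)\xrightarrow{\xi}\overline{P}[1]\xrightarrow{-\phi[1]}\overline{Q}[1]$ is isomorphic in $\uMF(W)$ to the standard triangle on $\psi$, i.e.\ one exhibits a homotopy equivalence $C(\psi)\simeq\overline{P}[1]$ intertwining the structure maps; this is done by the same $2\times 2$-block formulas as for complexes, using only $D_P^2=W\cdot\id$. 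Since $[2]=\id$, the inverse rotation is then formal. With (TR1) and (TR2) in hand, (TR3) reduces to the case of standard triangles, where a commuting square relating $\phi\colon\overline{P}\to\overline{Q}$ and $\phi'\colon\overline{P}'\to\overline{Q}'$ induces a canonical map $C(\phi)\to C(\phi')$ by functoriality of the cone, and commutativity of the resulting ladder is read off the block matrices.

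The main obstacle is (TR4), the octahedral axiom. I would follow the usual route: given $\overline{P}\xrightarrow{\phi}\overline{Q}\xrightarrow{\eta}\overline{R}$, write $C(\phi)$, $C(\eta)$, $C(\eta\phi)$ explicitly via \eqref{cone} together with the evident block inclusions and projections among them, check that the octahedral diagram commutes (certain faces only up to a specified null-homotopy), and show that $C(\phi)\to C(\eta\phi)\to C(\eta)\to C(\phi)[1]$ is exact by producing an explicit isomorphism in $\uMF(W)$ to a standard triangle. Every map and homotopy here is the matrix-factorization avatar of the corresponding one in Hartshorne's treatment of $K(\Acal)$; the $\Zx_2$-grading and the relation $D^2=W\cdot\id$ enter only to confirm that the block maps constructed are again morphisms of matrix factorizations, which they are because those relations are preserved by the block operations.

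Alternatively, a more structural argument is available: $\uPair(W)$ with its componentwise exact structure is a Frobenius category whose projective-injective objects are precisely the contractible matrix factorizations, and $\uMF(W)$ is the associated stable category, since the null-homotopic morphisms are exactly those factoring through contractibles; Happel's theorem then endows $\uMF(W)$ with a triangulated structure, which one checks coincides with the one defined above via \eqref{shift} and \eqref{cone}. This route trades the octahedral computation for the verification that $\uPair(W)$ has enough projectives and enough injectives and that the two classes coincide, which follows from explicitly embedding each $\overline{P}$ into, and presenting each $\overline{P}$ by, contractible matrix factorizations.
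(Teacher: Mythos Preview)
Your proposal is correct and aligns with the paper's own treatment: the paper does not give a detailed argument but simply remarks that the proof is the same as for the usual homotopy category, referring to \cite{GM96}. Your first approach is precisely an elaboration of that remark, spelling out the verification of (TR1)--(TR4) via the standard mapping-cone calculus, and your alternative Frobenius/Happel route is a well-known equivalent packaging; either is more than the paper itself provides.
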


The proof is the same as the analogous result for a usual homotopic category (see, for example \cite{GM96}).

\begin{dfn}
The category $\uMF(W)$ constructed above is called the triangulated category of matrix factorizations for the pair
$(X=\uSpec A, W)$.
\end{dfn}

Denote by $X_0$ the fiber of $W\colon X \fl \Cx$ over the point $0$. With any matrix factorization $\overline{P}$ we can associate a short exact sequence
$$
\xymatrix@C-1mm{0 \ar[r] & P_1 \ar[r]^-{p_1} & P_0 \ar[r] & \ucoker p_1 \ar[r] & 0.} 
$$
We can attach to an object $\overline{P}$ the sheaf $\ucoker p_1$. This is a sheaf on $X$. But the multiplication by
$W$ annihilates it. Hence, we can consider $\ucoker p_1$ as a sheaf on $X_0$. Any morphism
$\phi\colon \overline{P}\fl\overline{Q}$ in $\uPair(W)$ gives a morphism between cokernels. This way we get a functor
$\uCok\colon  \uPair(W) \fl \uCoh(X_0)$. We have the following result, see \cite[Theorem~3.9]{O105}.

\begin{thm}
There is a functor $F$ which completes the following commutative diagram
$$
\xymatrix{\uPair(W) \ar[r]^-{\uCok} \ar[d]& \uCoh(X_0) \ar[d]\\
          \uMF(W) \ar[r]_-{F} &  \Dd_{\Sg}(X_0).}
$$
Moreover, the functor $F$ is an equivalence of triangulated categories.
\end{thm}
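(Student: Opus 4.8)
The plan is to construct $F$ directly from $\uCok$ and then verify, in turn, that it descends to $\uMF(W)$, that it is exact, that it is fully faithful, and that it is essentially surjective; the last two carry the real weight, and full faithfulness is the main obstacle.

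\textbf{Construction and descent to $\uMF(W)$.} On objects put $F(\overline{P})=\ucoker p_1$, viewed as a coherent sheaf on $X_0$ (multiplication by $W$ kills it) and hence as an object of $\Dd_{\Sg}(X_0)$; on morphisms take the induced map of cokernels already supplied by $\uCok$. With the left vertical arrow the quotient functor $\uPair(W)\to\uMF(W)$ and the right vertical arrow the composite $\uCoh(X_0)\hookrightarrow\Dd(X_0)\to\Dd_{\Sg}(X_0)$, the square commutes on the nose. To see $F$ factors through $\uMF(W)$, suppose $\phi=(\phi_0,\phi_1)$ is null-homotopic, say $\phi_0=t_1p_0+q_1t_0$. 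Since $W\cdot\mathrm{id}_{P_0}=p_1p_0$ forces $WP_0\subseteq p_1(P_1)$, the map induced by $\phi_0$ on $\ucoker p_1=P_0/p_1(P_1)$ factors through the free $A/W$-module $P_0\otimes_A(A/W)$, hence through a locally free sheaf on $X_0$, hence through $\uPerf(X_0)$; so it vanishes in $\Dd_{\Sg}(X_0)$.

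\textbf{Exactness.} For the translation, the exact sequence $0\to\ucoker p_0\to P_0\otimes_A(A/W)\to\ucoker p_1\to0$ has perfect middle term, so in $\Dd_{\Sg}(X_0)$ it gives $\ucoker p_1\cong(\ucoker p_0)[1]$; since $\uCok(\overline{P}[1])=\ucoker p_0$ and $[2]=\mathrm{id}$, this yields a natural isomorphism $F\circ[1]\cong[1]\circ F$. For triangles it suffices to treat a standard one $\overline{P}\xrightarrow{\phi}\overline{Q}\xrightarrow{\psi}C(\phi)\xrightarrow{\xi}\overline{P}[1]$: applying the snake lemma to the three short exact sequences defining $\ucoker p_1$, $\ucoker q_1$, $\ucoker c_1$ produces a short exact sequence $0\to\ucoker q_1\to\ucoker c_1\to\ucoker p_0\to0$ whose connecting map is $\uCok(\phi[1])$, which identifies $\uCok(C(\phi))$ with the cone of $\uCok(\phi)$ in $\Dd_{\Sg}(X_0)$. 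Hence $F$ sends standard triangles to exact triangles.

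\textbf{Full faithfulness.} Because $X$ is smooth and $W$ a non-zero-divisor, $X_0$ is Gorenstein of finite dimension, and each $A/W$-module $\ucoker p_1$ is maximal Cohen--Macaulay (its projective dimension over the regular ring $A$ is $\le1$), so $\Es xt^i_{X_0}(\ucoker p_1,\Os_{X_0})=0$ for $i>0$ and Proposition~\ref{prop2.3} applies. The crucial point is that a matrix factorization $\overline{Q}$ furnishes a $2$-periodic free resolution of $\ucoker q_1$ over $A/W$, built from $\bar q_0$ and $\bar q_1$; computing $\uExt^{\bullet}_{X_0}(\ucoker p_1,\ucoker q_1)$ from it shows these groups are $2$-periodic in large degree and are computed by the cohomology of the $\Zx_2$-graded complex $\bHom(\overline{P},\overline{Q})$. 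Under this identification the subspace $\Rs$ of Proposition~\ref{prop2.3} --- extension classes factoring through a locally free sheaf --- matches exactly the null-homotopic morphisms, so $\uHom_{\uMF(W)}(\overline{P},\overline{Q})=H^0(\bHom(\overline{P},\overline{Q}))\cong\uHom_{\Dd_{\Sg}(X_0)}(\ucoker p_1,\ucoker q_1)$, compatibly with composition and with translation. (This is, in substance, Eisenbud's periodicity for hypersurface singularities together with Buchweitz's identification of the stable category of maximal Cohen--Macaulay modules with the category of singularities.)

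\textbf{Essential surjectivity, and the main obstacle.} Every object of $\Dd_{\Sg}(X_0)$ is isomorphic, up to translation, to the image of a coherent sheaf $\Fs$ on $X_0$. Viewing $\Fs$ as a finitely generated $A/W$-module, choose a finite free resolution over the regular ring $A$; since $W$ annihilates $\Fs$, multiplication by $W$ on this resolution is null-homotopic, and the homotopy together with the differentials assemble, after truncating far enough, into a matrix factorization $\overline{P}$ of $W$ whose cokernel is a high syzygy of $\Fs$. The intervening terms being free $A/W$-modules (perfect on $X_0$), that syzygy is isomorphic in $\Dd_{\Sg}(X_0)$ to $\Fs$ up to translation, so $F(\overline{P})\cong\Fs$. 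The technical core of the argument is the full-faithfulness step: that a matrix factorization yields an eventually $2$-periodic resolution of its cokernel, that $\uExt^{\bullet}_{X_0}$ stabilizes to this periodic pattern, and --- most delicately --- that the "factors through a free module" relation of Proposition~\ref{prop2.3} coincides with the homotopy relation defining $\uMF(W)$. That is exactly where the Gorenstein hypothesis and the morphism-space description of Proposition~\ref{prop2.3} are used.
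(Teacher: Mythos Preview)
The paper does not prove this theorem itself; it is simply quoted from Orlov \cite[Theorem~3.9]{O105}. The paper does, however, reprove the $G$-equivariant analogue in Section~\ref{sec6} by retracing Orlov's steps, so one can compare your outline against that.

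Your sketch follows Orlov's strategy and is essentially correct. Two small remarks. In the descent step your factorisation is slightly misstated: $\ucoker p_1=P_0/p_1(P_1)$ is a \emph{quotient} of $P_0/WP_0$, not a subobject, so the map does not factor through $P_0\otimes_A(A/W)$ as written. What actually works is $\ucoker p_1\xrightarrow{\overline{p_0}}P_1/WP_1\xrightarrow{\overline{t_1}}\ucoker q_1$ (using $p_0p_1=W$ and $Wt_1=q_1q_0t_1$); the paper's equivariant Proposition instead exhibits an explicit factorisation through $Q_0/WQ_0$ via an auxiliary contractible pair on $Q_0\oplus Q_1$. Either route gives a morphism factoring through a locally free $A/W$-module, hence zero in $\Dd_{\Sg}(X_0)$.

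For full faithfulness your use of the $2$-periodic resolution and Proposition~\ref{prop2.3} is precisely Orlov's argument, and you have correctly identified it as the crux. (In the equivariant theorem the paper bypasses this work by reducing to the already-known non-equivariant case via Reynolds operators, so the non-equivariant full faithfulness is indeed where the content lies.) Your essential surjectivity argument --- resolve over $A$, use that $W$ acts null-homotopically to extract a matrix factorization whose cokernel is a high syzygy, then note that intervening free $A/W$-modules are perfect on $X_0$ --- is again Orlov's.
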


\section{Orbifold categories}\label{sec5}
As is well known, for the Calabi-Yau/Landau-Ginzburg correspondence, one must consider orbifolds of D-branes in a Landau-Ginzburg theory. The definition of triangulated categories of singularities and matrix factorizations can be extended to this situation.

We start by recalling the definition and basic properties of equivariant coherent sheaves. More details can be found in \cite{Pl05}. Let $G$ be a finite group acting on some scheme $X$. A
$G$-equivariant coherent sheaf on $X$ is a coherent sheaf $\Es$ on $X$ together with isomorphisms $\lambda_g^{\Es}\colon  \Es \xrightarrow{\sim} g^* \Es$
for all $g \in G$ subject
to $\lambda_e^{\Es}=\id_{\Es}$ and $\lambda_{gh}^{\Es}=h^*(\lambda_g^{\Es})\lambda_h^{\Es}$. Mumford calls this a $G$-linearization of $\Es$.

If $\Es$ and $\Fs$ are two $G$-equivariant coherent sheaves, then the vector space $\uHom_X(\Es,\Fs)$ becomes a $G$-representation
via $g \cdot \theta=(\lambda_g^{\Fs} )^{-1}g^* \theta \lambda_g^{\Es}$ for $\theta\colon  \Es \fl \Fs$. Let $\uCoh^G(X)$ be
the category whose objects are $G$-equivariant coherent sheaves and whose morphisms are the $G$-invariant sheaf morphisms:
$$
G\text{-}\!\uHom_X(\Es,\Fs)\equiv \uHom_X(\Es,\Fs)^G.
$$ 
This category is abelian. It is not difficult to define the
usual additive functors $\otimes$, $\Hs om$ on this category. Furthermore, if $f\colon X \fl Y$ is a $G$-equivariant map between
$G$-schemes, then one defines in an obvious way the additive functors $f_*\colon \uCoh^G(X) \fl \uCoh^G(Y)$,
$f^*\colon \uCoh^G(Y) \fl \uCoh^G(X)$. For example, if $\Es \in \uCoh^G(X)$, then $f_* \Es$ is canonically a
$G$-equivariant coherent sheaf via $f_* \lambda_g^{\Es}\colon  f_* \Es \xrightarrow{\sim} f_* g^* \Es=g^* f_* \Es$. One now also has the usual
adjunctions and relations among these functors. 

We shall have to deal with the special case where $G$ acts trivially on $X$. Then a $G$-equivariant coherent sheaf $\Es$ is merely given by a group
homomorphism $\lambda^{\Es}\colon G \fl \uAut(\Es)$. As $G$ is finite, this representation decomposes into a direct sum over
the irreducible $G$-representations $\rho_0,\rho_1,\dots,\rho_n$, where we take $\rho_0$ to be the trivial one; i.e.~$\Es \cong
\bigoplus_{i=0}^n \Es_i \otimes_{\Os_X} \widetilde{\rho}_i$ in $\uCoh^G(X)$ with ordinary sheaves $\Es_i \in
\uCoh(X)$. There exists no homomorphisms between sumands corresponding to two different representations, and hence we
obtain two mutually adjoint and exact functors, the latter of which is \textquoteleft taking $G$-invariants\textquoteright:
\begin{align*}
-\otimes \rho_0 &\colon  \uCoh(X) \lfl \uCoh^G(X), \\
[-]^G &\colon  \uCoh^G(X) \lfl \uCoh(X).
\end{align*}

We come back now to the general case. Given two objects $\Es$ and $\Fs$ in $\uCoh^G(X)$, we consider $\uExt_X^i(\Es,\Fs)$ as a $G$-representation in the usual way. Then it is easily seen that
$$
G\text{-}\!\uExt_X^i(\Es,\Fs)=\uExt_X^i(\Es,\Fs)^G.
$$
Denote the bounded derived category of $\uCoh^G(X)$ by $\Dd^G(X)$. We shall refer to $\Dd^G(X)$ as the derived category of $G$-equivariant coherent sheaves on $X$. Using induction on the length of complexes,
the above relation for equivariant $\uExt$ groups translates to
$$
\uHom_{\Dd^G(X)}^i(\Es\spdot,\Fs\spdot)=\uHom_{\Dd(X)}^i(\Es\spdot,\Fs\spdot)^G,
$$
for complexes of $G$-equivariant coherent sheaves $\Es\spdot$ and $\Fs\spdot$ in $\Dd^G(X)$. Note that all facts about $G$-equivariant coherent sheaves also apply to complexes of $G$-equivariant coherent sheaves.

It will be useful for us to look at $\Dd^G(X)$ in another way. Consider the quotient stack $[X/G]$. It is covered by one \'{e}tale chart, given by the projection $X \fl X/G$, or more explicitly, by the fiber diagram
$$
\xymatrix{G \times X \ar[r]^-{p} \ar[d]_-{\sigma}& X \ar[d] \\
            X \ar[r] & X/G.}
$$
Now a sheaf on the stack $[X/G]$ is just a sheaf $\Es$ on the chart $X$ with $p^* \Es \cong \sigma^* \Es$, and the
descend condition translates into the linearization property. Therefore, the abelian categories $\uCoh([X/G])$
and $\uCoh^G(X)$ are equivalent, and consequently they give rise to equivalent derived categories.

A {\it perfect complex} of $G$-equivariant coherent sheaves is an object of $\Dd([X/G])$ which is quasi-isomorphic to a bounded complex of locally free sheaves on $[X/G]$. The perfect complexes of $G$-equivariant coherent sheaves form a full triangulated subcategory $\uPerf([X/G]) \subset \Dd([X/G]) \cong \Dd^G(X)$.

\begin{dfn}
Define the $G$-equivariant category of singularities $\Dd^G_{\Sg}(X)$ of $X$ as the quotient category
$\Dd^G(X)/\uPerf([X/G])$.
\end{dfn}

One can show that the entire discussion we had in Section~\ref{sec3} goes through in the case of $G$-equivariant coherent
sheaves.

It also makes sense to define $G$-equivariant matrix factorizations. Suppose $X=\uSpec A$ is a $G$-scheme. It is natural to define the following abelian category
$\ueMod^G\!\!\text{\textendash} A$. Its objects are $A$-modules $M$ with the property that for every $g \in G$, there is given an
$A$-isomorphism $\lambda_g^M\colon M \fl g^* M$, such that for every $g,h \in G$, we have
$\lambda_{gh}^M=h^*(\lambda_g^M) \lambda_h^M$ and $\lambda_e^M=\id_M$. Note that in this expression $g^*M=g_*^{-1}M$
is just the abelian group $M$ with its $A$-module structure induced by $g^{-1}\colon A \fl A$. A morphism $\phi\colon M \fl N$ is just an
$A$-homomorphism, which should satisfy the property that for all $g \in G$ and  $m \in M$, we have
$\phi(\lambda_g^M(m))=\lambda_g^N(\phi(m))$. This clearly gives rise to an abelian category in a natural way. Likewise, it has
an abelian subcategory determined by the full subcategory of finitely generated $A$-modules, which we will denote by
$\uemod^G\!\!\text{\textendash} A$. Note that if $X$ happens to be a trivial $G$-scheme, we have
$\uemod^G\!\!\text{\textendash} A=\Cx G \text{\textendash} \!\umod A$ (just a category of bimodules). We can now define in an
obvious way a functor
$$
H^0\colon \uQcoh^G(X) \lfl \ueMod^G\!\!\text{\textendash} A,
$$
which is an equivalence with inverse $\widetilde{(-)}$. Moreover this functor restrict to an equivalence
$$
H^0\colon \uCoh^G(X) \lfl \uemod^G\!\!\text{\textendash} A.
$$
Note that these functors are just extensions of the previous ones.

Now assume that there is an action of the group $G$ on the Landau-Ginzburg model $(X=\uSpec A,W)$ such that the
superpotential $W$ is $G$-equivariant. In this case, we can consider two categories: an exact category $\uPair^G(W)$
and a triangulated category $\uMF^G(W)$. Objects of these categories are ordered pairs
$$
\overline{P}=\big( \xymatrix{P_0 \ar@<0.4ex>[r]^-{p_0} & P_1
 \ar@<0.4ex>[l]^-{p_1}}  \big)
$$ 
where $P_0$, $P_1$ are finitely generated projective $G$-$A$-modules and $p_0$, $p_1$ are $G$-equivariant maps such that the
compositions $p_0p_1$ and $p_1p_0$ are the multiplication by the element $W \in A$. A morphism $\phi\colon  \overline{P} \fl
\overline{Q}$ in the category $\uPair^G(W)$ is a pair of $G$-equivariant morphisms $\phi_0\colon P_0 \fl Q_0$ and $\phi_1\colon P_1
\fl Q_1$ such that $\phi_1 p_0=q_0 \phi_0$ and $q_1 \phi_1=\phi_0 p_1$. Morphisms in the category $\uMF^G(W)$ are
classes of $G$-equivariant morphisms in $\uPair^G(W)$ modulo null-homotopic morphisms. The shift functor and the
distinguished triangles can be constructed by imposing equivariance conditions on equations (\ref{shift}) and (\ref{cone}).

\begin{dfn}
The category $\uMF^G(W)$ constructed above is called the triangulated category of $G$-equivariant matrix factorizations for the pair
$(X=\uSpec A, W)$.
\end{dfn}

\section{Categories of matrix factorizations and categories of singularities}\label{sec6}
Our aim now is to describe an equivalence of categories between $\uMF^G(W)$, the category of $G$-equivariant
matrix factorizations and $\Dd^G_{\Sg}(X_0)$, the $G$-equivariant category of singularities. In the
non-equivariant setting, we have seen in Section~\ref{sec4} that $\uMF(W)$ is equivalent to
$\Dd_{\Sg}(X_0)$. The generalization to the equivariant situation is straightforward. Our proofs in this section are modeled on
those in \cite{O105}. 

With any object $\overline{P}$ in $\uPair^G(W)$ we associate the module $\ucoker p_1$ and its free
resolution
$$
\xymatrix@C-1mm{0 \ar[r] & P_1 \ar[r]^-{p_1} & P_0 \ar[r] & \ucoker p_1 \ar[r] & 0.} 
$$
It can be easily checked that $W$ annihilates $\ucoker p_1$. Hence the module $\ucoker p_1$ is
naturally a right $G$-$A$-module. For each object $\overline{P}$ in $\uPair^G(W)$ we define
$\uCok^G(\overline{P})=\ucoker p_1$; this is a $G$-equivariant coherent sheaf on $X_0$. If $\phi\colon \overline{P} \fl
\overline{Q}$ is a morphism in $\uPair^G(W)$ then $\phi$ induces a morphism $\uCok^G(\phi)\colon \ucoker p_1
\fl \ucoker q_1$. This construction defines a functor $\uCok^G\colon \uPair^G(W) \fl
\uCoh^G(X_0)$.

\begin{lem}
The functor $\uCok^G$ is full.
\end{lem}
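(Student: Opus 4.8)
The plan is to reduce the equivariant statement to the non-equivariant one, which is implicit in Orlov's work (Theorem~3.9 as quoted) and whose proof is what this section is "modeled on." First I would recall that the underlying functor $\uCok\colon \uPair(W) \fl \uCoh(X_0)$ is full: given $\overline P = (P,\tau,D_P)$ and $\overline Q = (Q,\sigma,D_Q)$ and a map $\bar\phi\colon \ucoker p_1 \fl \ucoker q_1$ of sheaves on $X_0$, one lifts it along the free resolutions. Concretely, since $P_0$ is projective and $P_0 \twoheadrightarrow \ucoker p_1$, the composite $P_0 \fl \ucoker p_1 \fl \ucoker q_1$ lifts through $Q_0 \twoheadrightarrow \ucoker q_1$ to a map $\phi_0\colon P_0 \fl Q_0$; then $q_0\phi_0 p_1 = 0$ forces $q_1\phi_0 p_1 p_0 = W q_1 \phi_0 = 0$, hmm, rather one argues directly that $\phi_0 p_1$ lands in $\ker(Q_0 \to \ucoker q_1) = \uim q_1$, and since $Q_1$ is projective and $q_1$ is injective (it is a monomorphism of finitely generated projectives, being split after inverting $W$) — more carefully, one uses that $p_1\colon P_1 \fl P_0$ is injective with $P_1$ projective to produce $\phi_1\colon P_1 \fl Q_1$ with $q_1\phi_1 = \phi_0 p_1$, and checks $\phi_1 p_0 = q_0 \phi_0$ holds after multiplying by $W$ hence holds (as the modules are $W$-torsion-free, being projective over $A$ with $W$ a nonzerodivisor — one needs $W$ a nonzerodivisor, which holds since $W$ has an isolated critical point, or one argues via the $\Zx_2$-grading directly). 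This produces $\phi = (\phi_0,\phi_1) \in \uHom_{\uPair(W)}(\overline P,\overline Q)$ with $\uCok(\phi) = \bar\phi$.

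Next I would upgrade this to the equivariant setting. The point is that every choice in the above construction can be made $G$-equivariantly, because $G$ is finite and we work over $\Cx$, so taking $G$-invariants is exact and averaging is available. Precisely: the set of liftings $\phi_0$ of a \emph{fixed} $G$-equivariant map $P_0 \fl \ucoker q_1$ through the $G$-equivariant surjection $Q_0 \twoheadrightarrow \ucoker q_1$ is a nonempty torsor under $\uHom_A(P_0, \uim(\text{the previous differential}))$-type group, on which $G$ acts; picking any lift $\phi_0$ and replacing it by the average $\frac{1}{|G|}\sum_{g\in G} g\cdot \phi_0$ yields a $G$-equivariant lift. (Equivalently: $\uHom_A(P_0, Q_0)^G \twoheadrightarrow \uHom_A(P_0, \ucoker q_1)^G$ because $P_0$ is projective so $\uHom_A(P_0,-)$ is exact and $(-)^G$ is exact on $\Cx G$-modules.) The same averaging trick produces the $G$-equivariant $\phi_1$. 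Thus the pair $(\phi_0,\phi_1)$ can be taken to be a morphism in $\uPair^G(W)$, and by construction $\uCok^G(\phi_0,\phi_1) = \bar\phi$ for the given $G$-equivariant $\bar\phi\colon \ucoker p_1 \fl \ucoker q_1$.

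The main obstacle, and the step deserving the most care, is verifying that the lifted $\phi_1$ actually satisfies the second commutativity relation $\phi_1 p_0 = q_0 \phi_0$ (not merely that it intertwines the cokernels) — i.e., that the lift of a sheaf map genuinely assembles into a matrix-factorization morphism rather than just a chain map of the two-term resolutions. The standard resolution is to observe that $\overline P$ and its shift fit into a $2$-periodic exact complex (the free resolution $\cdots \to P_1 \xrightarrow{p_1} P_0 \xrightarrow{p_1} P_1 \to \cdots$ — here using $p_0 p_1 = p_1 p_0 = W$ and that the cokernel is a module over $A/W$), so a lift of $\bar\phi$ along the truncated resolution extends to a chain map of the full periodic resolution, and periodicity then forces the extra relation up to homotopy; but since we only need \emph{some} preimage under $\uCok^G$ (fullness, not faithfulness), it suffices to correct $\phi_1$ by the homotopy term, which again can be averaged to stay $G$-equivariant. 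I would spell this out carefully, noting explicitly where $W$ being a nonzerodivisor (guaranteed by the isolated-critical-point hypothesis on $f$ in the application, or assumed as a standing hypothesis on $W$) is used to pass from "equality after multiplication by $W$" to "equality."
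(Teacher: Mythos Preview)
Your argument is correct, and its core---lift $f$ to a chain map $(\phi_0,\phi_1)$ of the two-term resolutions via projectivity, then verify the remaining relation---is exactly the paper's approach. But you are overcomplicating the last step. The paper dispatches the relation $\phi_1 p_0 = q_0 \phi_0$ in one line by composing with $q_1$:
\[
q_1(\phi_1 p_0 - q_0\phi_0) = (q_1\phi_1) p_0 - (q_1 q_0)\phi_0 = \phi_0 p_1 p_0 - W\phi_0 = \phi_0 W - W\phi_0 = 0,
\]
and then invokes that $q_1$ is a monomorphism. This is precisely your ``holds after multiplying by $W$, hence holds'' observation from the first paragraph (composing with $q_1$ and multiplying by $W=q_0q_1$ amount to the same thing on torsion-free modules), so your final paragraph's detour through the $2$-periodic resolution and homotopy correction is unnecessary: the lifted chain map is already a morphism of pairs on the nose, with nothing to adjust. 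For the equivariant lift the paper simply writes ``since $P_0$ and $P_1$ are projective'' without further comment; your averaging argument is the explicit justification that projective $G$-$A$-modules remain projective in the equivariant category over $\Cx$, and is a welcome clarification.
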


\begin{proof}
This is essentially the Lemma~3.5 proved in \cite{O105}. We recall its proof for the convenience of readers. Fix two objects $\overline{P}$ and $\overline{Q}$ in $\uPair^G(W)$ and let $f\colon \ucoker p_1
\fl \ucoker q_1$ be a morphism in $\uCoh^G(X_0)$. Since $P_0$ and $P_1$ are projective $f$ can be
extended to a map of exact sequences
$$
\xymatrix{0 \ar[r] & P_1 \ar[r]^-{p_1}\ar[d]_-{\phi_1} & P_0 \ar[r]\ar[d]^-{\phi_0} & \ucoker p_1 \ar[d]^-{f}\ar[r] & 0 \\
0 \ar[r] & Q_1 \ar[r]^-{q_1} & Q_0 \ar[r] & \ucoker q_1 \ar[r] & 0.}
$$
We want to show that $\phi=(\phi_0,\phi_1)$ is a map of pairs. We have that
$$
q_1(\phi_1 p_0-q_0 \phi_0)=\phi_0p_1p_0-q_1q_0\phi_0=\phi_0 W -W \phi_0=0.
$$
Using that $q_1$ is a monomorphism, we get that $\phi_1 p_0=q_0 \phi_0$, which shows that $\phi=(\phi_0,\phi_1)$ is a map of pairs, as required.
\end{proof}

Next we show that the functor $\uCok^G$ induces an exact functor between triangulated categories.

\begin{prop}
There is a functor $F^G$ which completes the following commutative diagram
$$
\xymatrix{\uPair^G(W) \ar[r]^-{\uCok^G} \ar[d]& \uCoh^G(X_0) \ar[d]\\
          \uMF^G(W) \ar[r]^-{F^G} &  \Dd^G_{\Sg}(X_0).}
$$
Moreover, the functor $F^G$ is an exact functor between triangulated categories.
\end{prop}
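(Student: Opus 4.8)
The plan is to reproduce Orlov's construction of $F$ in the $G$-equivariant setting: every ingredient of his argument — forming the cokernel of a matrix factorization, reducing modulo $W$, and the periodic resolution of the cokernel over the fiber — is manifestly compatible with the $G$-action, so the non-equivariant verifications go through verbatim once the action is carried along. The one general fact I will use about Verdier quotients is that a morphism in $\Ds/\Ns$ vanishes as soon as it factors through an object of $\Ns$; here $\Ns=\uPerf([X_0/G])$ contains every $G$-equivariant locally free sheaf on $X_0$ concentrated in a single degree.

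The first step is to produce $F^G$ by checking that the composite of $\uCok^G$ with $\uCoh^G(X_0)\fl\Dd^G(X_0)\fl\Dd^G_{\Sg}(X_0)$ kills null-homotopic morphisms. Let $\phi=(\phi_0,\phi_1)\colon\overline P\fl\overline Q$ be null-homotopic, say $\phi_0=t_1 p_0+q_1 t_0$ with $G$-equivariant $t_0\colon P_0\fl Q_1$ and $t_1\colon P_1\fl Q_0$. From $p_0 p_1=W\cdot\id_{P_1}$ one sees that $p_0$ induces a $G$-equivariant map $\bar p_0\colon\ucoker p_1\fl P_1/WP_1$, while $q_1 q_0=W\cdot\id_{Q_0}$ gives $WQ_0\subseteq\uim q_1$, so that the projection $Q_0\fl\ucoker q_1$ descends to $Q_0/WQ_0\fl\ucoker q_1$. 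A direct computation shows that $\uCok^G(\phi)$ equals the composite
$$
\xymatrix@C-1mm{\ucoker p_1 \ar[r]^-{\bar p_0} & P_1/WP_1 \ar[r]^-{\bar t_1} & Q_0/WQ_0 \ar[r] & \ucoker q_1}
$$
($\bar t_1$ induced by $t_1$): indeed, modulo $\uim q_1$ the class of $x\in P_0$ is sent to the class of $t_1 p_0(x)=\phi_0(x)-q_1 t_0(x)$, which is the class of $\phi_0(x)$. Since $P_1$ is a finitely generated projective $A$-module, $P_1/WP_1$ is a $G$-equivariant locally free sheaf on $X_0$, hence perfect, and therefore $\uCok^G(\phi)=0$ in $\Dd^G_{\Sg}(X_0)$. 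By additivity the composite in question factors through the projection $\uPair^G(W)\fl\uMF^G(W)$, which defines $F^G\colon\uMF^G(W)\fl\Dd^G_{\Sg}(X_0)$ and makes the square commute.

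Next I verify that $F^G$ commutes with translation. Assuming, as in \cite{O105}, that $W$ is a nonzerodivisor, the maps $p_0$ and $p_1$ are injective and the $\Zx_2$-periodic complex of $G$-equivariant locally free sheaves on $X_0$
$$
\xymatrix@C-1mm{\cdots \ar[r] & P_0/WP_0 \ar[r]^-{\bar p_0} & P_1/WP_1 \ar[r]^-{\bar p_1} & P_0/WP_0 \ar[r]^-{\bar p_0} & P_1/WP_1 \ar[r] & \cdots}
$$
is exact. Using $WP_0\subseteq\uim p_1$ and $WP_1\subseteq\uim p_0$, a chase through this resolution identifies $\uCok^G(\overline P)=\ucoker p_1$ with $\uker\bar p_1$ and $\uCok^G(\overline P[1])=\ucoker p_0$ with $\uim\bar p_1$, so that $\bar p_1$ yields a short exact sequence of $G$-equivariant sheaves on $X_0$, natural in $\overline P$,
$$
\xymatrix@C-1mm{0 \ar[r] & \uCok^G(\overline P) \ar[r] & P_1/WP_1 \ar[r] & \uCok^G(\overline P[1]) \ar[r] & 0.}
$$
Its middle term is perfect, so the associated triangle in $\Dd^G(X_0)$ descends to a natural isomorphism $F^G(\overline P[1])\cong F^G(\overline P)[1]$ in $\Dd^G_{\Sg}(X_0)$.

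Finally, for the preservation of exact triangles, I start from a standard triangle $\overline P\xrightarrow{\phi}\overline Q\xrightarrow{\psi}C(\phi)\xrightarrow{\xi}\overline P[1]$; it underlies a short exact sequence $0\fl\overline Q\fl C(\phi)\fl\overline P[1]\fl0$ in $\uPair^G(W)$ that is split in each $\Zx_2$-degree, with $c_1$ block upper triangular having diagonal blocks $q_1$ and $-p_0$. Since $q_1$, $p_0$, and hence $c_1$, are injective, applying $\uCok^G$ and the snake lemma gives a short exact sequence $0\fl\uCok^G(\overline Q)\xrightarrow{\uCok^G(\psi)}\uCok^G(C(\phi))\xrightarrow{\uCok^G(\xi)}\uCok^G(\overline P[1])\fl0$ of $G$-equivariant sheaves on $X_0$, whose associated triangle in $\Dd^G(X_0)$ I push to $\Dd^G_{\Sg}(X_0)$ and rotate. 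Comparing with $F^G$ applied to the standard triangle, the only remaining point is to identify the resulting connecting morphism $\uCok^G(\overline P[1])\fl\uCok^G(\overline Q)[1]$ — transported along the isomorphism of the previous paragraph — with $F^G(\phi)[1]$ up to sign. This is the main obstacle: it is a diagram chase with the explicit matrices $c_0,c_1$ defining the cone, entirely parallel to Orlov's non-equivariant computation in \cite{O105}, the only new feature being that every arrow in sight is $G$-equivariant, so no further input is required.
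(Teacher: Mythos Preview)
Your proof is correct and follows essentially the same approach as the paper, which also lifts Orlov's non-equivariant argument verbatim to the $G$-equivariant setting. The only cosmetic difference is that the paper factors the image of a null-homotopic $\phi$ through the locally free object $Q_0/WQ_0$ via the pair of maps $(t_0,\phi_0)$ and $(t_1,\phi_1)$ into a doubled copy of $\overline{Q}$, whereas you factor through $P_1/WP_1$; both achieve the same end, and the paper, like you, simply defers the exactness check to Orlov's original computation.
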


\begin{proof}
Most of the argument is identical to the non-equivariant case proved in \cite[Proposition~3.7]{O105}. We define a functor $F^G\colon \uPair^G(W) \fl \Dd^G_{\Sg}(X_0)$ to be the composition of $\uCok^G$ and the natural functor from $\uCoh^G(X_0)$ to $\Dd^G_{\Sg}(X_0)$. To prove that $F^G$ induces a functor from $\uMF^G(W)$ to $\Dd^G_{\Sg}(X_0)$ we need to show that any morphism
$\phi=(\phi_0,\phi_1)\colon \overline{P} \fl \overline{Q}$ in $\uPair^G(W)$ which is homotopic to $0$ goes to
$0$-morphism in $\Dd^G_{\Sg}(X_0)$. Fix a homotopy $t=(t_0,t_1)$ where $t_0\colon P_0 \fl Q_1$ and $t_1\colon P_1 \fl Q_0$.
Consider the following decomposition of $\phi$:
$$
\xymatrix{P_1 \ar@<0.4ex>[r]^-{p_1} \ar[d]_-{(t_1,\phi_1)}& P_0
 \ar@<0.4ex>[l]^-{p_0}\ar[d]^-{(t_0,\phi_0)} \ar[r] &\ucoker p_1 \ar[d] \\
 Q_0\oplus Q_1 \ar@<0.4ex>[r]^-{c_1} \ar[d]_-{\mathrm{pr}}& Q_1 \oplus Q_0
 \ar@<0.4ex>[l]^-{c_0}\ar[d]^-{\mathrm{pr}} \ar[r] &Q_0/W \ar[d]\\
 Q_1 \ar@<0.4ex>[r]^-{q_1}& Q_0
 \ar@<0.4ex>[l]^-{q_0} \ar[r] &\ucoker q_1}
$$
where
$$
c_0=\left( \begin{array}{cc} -q_0 & \mathrm{id} \\
                                               0 & q_1 \end{array}  \right), \quad c_1=\left( \begin{array}{cc}
      -q_1 & \mathrm{id} \\
                                                0 & q_0 \end{array}  \right).
$$
This gives a decomposition of $F^G(\phi)$ through a $G$-equivariant locally free object $Q_0/W$ on $X_0$. By Proposition~\ref{prop2.3} we have that $F^G(\phi)=0$ in the
category $\Dd^G_{\Sg}(X_0)$. It is not difficult to check that $F^G$ takes a standard
triangle in $\uMF^G(W)$ to an exact triangle in $\Dd^G_{\Sg}(X_0)$. Therefore $F^G$ is exact.
\end{proof}

Notice that there is a natural forgetful functor $U\colon   \uMF^G(W) \fl  \uMF(W)$, which simply forgets the $G$-action. We have the natural second forgetful functor $U\colon  \Dd^G_{\Sg}(X_0) \fl \Dd_{\Sg}(X_0)$. 
For each $\overline{P}$ in $\uMF^G(W)$, the two objects $UF^G\overline{P}$ and $FU\overline{P}$ coincide. More precisely, there is a commutative diagram
$$
\xymatrix{\uMF^G(W) \ar[r]^-{F^G} \ar[d]_-{U}& \Dd^G_{\Sg}(X_0) \ar[d]^-{U}\\
          \uMF(W) \ar[r]^{\sim}_-{F} &  \Dd_{\Sg}(X_0).}
$$
We can now prove the main result of this section.

\begin{thm}\label{branes-singularities}
The functor $F^G\colon  \uMF^G(W) \fl \Dd^G_{\Sg}(X_0)$ is an equivalence of triangulated categories.
\end{thm}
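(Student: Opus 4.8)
The plan is to follow Orlov's proof of the non-equivariant statement (the theorem at the end of Section~\ref{sec4}), carrying the $G$-linearizations along at every step. Since $G$ is finite and we work over $\Cx$, the functor of taking $G$-invariants is exact, and all free and locally free resolutions used below can be chosen $G$-equivariantly; this is what makes the transfer routine. We already know $F^G$ is exact, so it suffices to prove that it is fully faithful and essentially surjective.

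For full faithfulness I would reduce to the non-equivariant case, which is known. Directly from the definitions, $\uHom_{\uMF^G(W)}(\overline{P},\overline{Q})=\uHom_{\uMF(W)}(U\overline{P},U\overline{Q})^G$, where $U$ is the forgetful functor and $G$ acts by conjugation through the linearizations. The analogous identity $\uHom_{\Dd^G_{\Sg}(X_0)}(\Es,\Fs)=\uHom_{\Dd_{\Sg}(X_0)}(U\Es,U\Fs)^G$ holds as well: using the equivariant form of Proposition~\ref{prop2.3} (the text asserts that all of Section~\ref{sec3} goes through equivariantly), for suitable $\Es,\Fs$ and a suitable $n$ both sides are $\uExt^n_X(\Es,\Fs)$, respectively its $G$-invariant part, modulo the subspace of classes factoring through a locally free sheaf, respectively its $G$-invariant part, and exactness of $(-)^G$ finishes it. Granting these two identities together with the commutative square relating $F^G$, $F$, and the forgetful functors, the isomorphism property of $F$ on morphism spaces passes to $F^G$ by taking $G$-invariants. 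The only genuinely new ingredient here is the observation that a $G$-equivariant complex on $X_0$ lies in $\uPerf([X_0/G])$ exactly when its underlying complex lies in $\uPerf(X_0)$; this follows by truncating a possibly unbounded $G$-equivariant locally free resolution and using that a $G$-equivariant coherent sheaf is locally free on $[X_0/G]$ as soon as its underlying $\Os_{X_0}$-module is locally free.

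For essential surjectivity I would start from an arbitrary object of $\Dd^G_{\Sg}(X_0)$ and apply the equivariant version of Orlov's stabilization argument from Section~\ref{sec3}: replacing the object, up to a shift and an isomorphism in $\Dd^G_{\Sg}(X_0)$, by a sufficiently high syzygy in a $G$-equivariant locally free resolution, one may assume it is a single $G$-equivariant coherent sheaf $\Es$ on $X_0$ which, as a module over $B=A/(W)$, is maximal Cohen--Macaulay (being a high syzygy; this is a property of the underlying module, inherited together with the $G$-action). By Eisenbud's theorem the minimal $A$-free resolution of such a $B$-module is $2$-periodic; choosing it $G$-equivariantly yields a $G$-equivariant matrix factorization $\overline{P}$ with $\uCok^G\overline{P}\cong\Es$, so that $F^G\overline{P}$ is isomorphic to a shift of the original object. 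Since $[1]$ is an autoequivalence of $\uMF^G(W)$, this gives essential surjectivity, and $F^G$ is an equivalence.

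I expect the main obstacle to be the essential-surjectivity step --- concretely, verifying that each piece of the stabilization machinery survives in the equivariant setting: the reduction of a complex to a high syzygy sheaf, Eisenbud's periodicity for a maximal Cohen--Macaulay module over the hypersurface ring $B$, and the extraction of the periodic tail as a $G$-equivariant matrix factorization, all with the $G$-linearizations tracked coherently rather than merely quoted. A similar bookkeeping care is needed for the equivariant forms of Propositions~\ref{locality} and \ref{prop2.3} invoked in the full-faithfulness step, although the text has essentially granted these.
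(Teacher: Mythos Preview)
Your proposal is correct and takes essentially the same approach as the paper. For full faithfulness, both reduce to the non-equivariant equivalence $F$ via the forgetful functors and the identification of equivariant Hom spaces with $G$-invariants (the paper presents this as two separate diagram chases, one using inclusions of $G$-invariants for faithfulness and one using Reynolds operators for fullness, but the content is the same as yours); for essential surjectivity, the paper simply cites \cite[Theorem~3.9]{O105} and asserts that the argument carries over without change, which is exactly the stabilization-to-MCM-then-Eisenbud-periodicity argument you outline in more detail.
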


\begin{proof}
First we verify that the functor $F^G$ is fully faithful. This follows from the arguments of \cite[Lemma~5]{Pl07}. We repeat the proof in the current setting. Fix two objects $\overline{P}$ and $\overline{Q}$ in $\uMF^G(W)$. By definition of
morphisms in $\uMF^G(W)$ and $\Dd^G_{\Sg}(X_0)$, we have a diagram
$$
\xymatrix{\uHom_{\uMF(W)}(U\overline{P},U\overline{Q}) \ar[r]^-{\sim} &
\uHom_{ \Dd_{\Sg}(X_0)}(FU\overline{P},FU\overline{Q})\\
\uHom_{\uMF(W)}(U\overline{P},U\overline{Q})^G \ar@{^{(}->}[u] &
\uHom_{ \Dd_{\Sg}(X_0)}(UF^G\overline{P},UF^G\overline{Q})^G \ar@{^{(}->}[u]\\
\uHom_{\uMF^G(W)}(\overline{P},\overline{Q}) \ar@{=}[u] \ar[r] &
\uHom_{ \Dd^G_{\Sg}(X_0)}(F^G\overline{P},F^G\overline{Q}) \ar@{=}[u]}
$$
and the top morphism is a bijection. Thus the lower map of the diagram is injective, and hence $F^G$ is faithful. To
see that $F^G$ is full as well, consider the following variation of the former diagram
$$
\xymatrix{\uHom_{\uMF(W)}(U\overline{P},U\overline{Q}) \ar[r]^-{\sim}\ar@{->>}[d]_-{\pi} &
\uHom_{ \Dd_{\Sg}(X_0)}(FU\overline{P},FU\overline{Q}) \ar@{->>}[d]^-{\rho}\\
\uHom_{\uMF(W)}(U\overline{P},U\overline{Q})^G  &
\uHom_{ \Dd_{\Sg}(X_0)}(UF^G\overline{P},UF^G\overline{Q})^G \\
\uHom_{\uMF^G(W)}(\overline{P},\overline{Q}) \ar@{=}[u] \ar[r] &
\uHom_{ \Dd^G_{\Sg}(X_0)}(F^G\overline{P},F^G\overline{Q}) \ar@{=}[u]}
$$
using the averaging (or Reynolds) operators $\pi$ and $\rho$. We obviously have $\pi(\phi)=\phi$ (respectively $\rho(f)=f$) if and only if $\phi$
(respectively $f$) is a $G$-equivariant morphism. In particular, $\pi$ and $\rho$ are surjective. The fact that the
functor $F$ is full then implies the same property for $F^G$.

What remains to be proved is that every object $\As$ in $\Dd_{\Sg}^G(X_0)$ is isomorphic to $F^G
\overline{P}$ for some $\overline{P}$.  A complete proof of this is given in  \cite[Theorem~3.9]{O105}; it carries over without change. 
\end{proof}

\section{McKay correspondence for Landau-Ginzburg models}
Here we use the results from the preceding sections to prove a version of the McKay correspondence for Landau-Ginzburg models. We begin by reviewing the basic setting.

Let $M=\Cx^n$ be the complex $n$-dimensional affine space, and let $G$ be a finite subgroup of $\uSL(n,\Cx)$. Put $X=M/G$ and let $\pi\colon  M \fl X$ denote the natural projection. We assume that $G$ acts on $M$ freely outside the origin, which means that $X$ has an isolated singularity\footnote{This is for the purpose of simplicity \textendash the method would seem to be applicable to the general case with some modifications.}. Write $G\text{-}\!\uHilb(M)$ for the Hilbert scheme parametrising $G$-clusters in $M$, that is, the scheme parametrising $G$-invariant subschemes $Z \subset M$ of dimension zero with global sections $H^0(\Os_{Z})$ isomorphic as a $\Cx G$-module to the regular representation of $G$. Let $Y$ be the irreducible component of $G\text{-}\!\uHilb(M)$ which contains the $G$-clusters of free orbits. There is a Hilbert-Chow morphism $\tau: G\text{-}\!\uHilb(M) \fl X$ which, on closed points, sends a $G$-cluster to the orbit supporting it. This morphism is always projective and the irreducible component $Y \subset G\text{-}\!\uHilb(M)$ is mapped birationally onto $X$. We use the same notation $\tau$ for the restriction of the map to $Y$. 

Now let $\Zs \subset Y \times M$ denote the universal closed subscheme, and consider its structure sheaf $\Os_{\Zs}$. We remark that $\Os_{\Zs}$ has finite homological dimension, because $\Os_{\Zs}$ is flat over $Y$ and $M$ is nonsingular. Let $\Dd(Y)$ and $\Dd^G(M)$ denote the bounded derived categories of coherent sheaves on $Y$ and $G$-equivariant coherent sheaves on $M$, respectively. If $\pi_Y$ and $\pi_M$ are the projections from $Y \times M$ to $Y$ and $M$, we define a functor $\Phi\colon \Dd(Y) \fl \Dd^G(M)$ by the formula
$$
\Phi(-)=\Rd \pi_{M *}(\Os^{\vee}_{\Zs}[n] \otimes^{\Ld} \pi_{Y}^*(- \otimes \rho_0))
$$
where $\Os^{\vee}_{\Zs}$ denotes the derived dual $\Rd\Hs om\spdot_{Y \times M}(\Os_{\Zs},\Os_{Y \times M})$. Our main result will be shown under the following assumption.
\begin{assum}\label{assumBKR}
$\tau\colon Y \fl X$ is a crepant resolution and $\Phi$ is an equivalence of triangulated categories.
\end{assum}
The quasi-inverse $\Psi\colon  \Dd^G(M)\fl \Dd(Y)$ can be calculated using Grothendieck duality as the right adjoint of $\Phi$, given by the formula
$$
\Psi(-)=[\Rd \pi_{Y *}(\Os_{\Zs} \otimes^{\Ld} \pi_{M}^*(- ))]^G.
$$
Assumption~\ref{assumBKR} is known to hold if $\dim (Y \times_X Y) \leq n+1$ due to work of Bridgeland, King and Reid \cite{BKR01} together with the results of \cite{BridgMacio02}. In the case of $n \leq 3$, this dimension condition is always fulfilled because the exceptional locus of $\tau$ has dimension $\leq 2$. However, for $n \geq 4$ this condition rarely holds.

We need to make a remark here. In \cite{BKR01}, the definitions of $\Phi$ and $\Psi$ differ slightly from the ones we took. Bridgeland, King and Reid define
\begin{align*}
\Phi(-) &=\Rd \pi_{M *}(\Os_{\Zs} \otimes^{\Ld} \pi_{Y}^*(- \otimes \rho_0)), \\
\Psi(-)&=[\Rd \pi_{Y *}(\Os^{\vee}_{\Zs}[n] \otimes^{\Ld} \pi_{M}^*(- ))]^G.
\end{align*}
It is clear that this difference does not really change the proof of the main result of \cite{BKR01}. The only difference is that everywhere $\Os_{\Zs}$ and $\Os_{\Zs}^{\vee}$ become interchanged.

Assume now that $f\colon M \fl \Cx$ is a regular function with an isolated critical point at the origin which is invariant with respect to the action of $G$ on $M$. We can regard $M$ as a Landau-Ginzburg orbifold with superpotential $f$. We denote by $M_0$ the fiber of the map $f$ over the point $0 \in \Cx$. Next, let $\overline{f}\colon X \fl \Cx$ be the unique morphism such that $f=\overline{f} \pi$. Another Landau-Ginzburg model consists of the variety $Y$ and superpotential $g\colon Y \fl \Cx$ obtained by pullback of $\overline{f}$ to $Y$. We let $Y_0$ be the fiber of $g$ over the point $0$. Note that $Y_0$ contains the exceptional locus $\tau^{-1}(\pi(0))$ of the resolution. Note also that the function $g$ will, generally speaking, have non-isolated critical points. For future use, we let $i_0\colon  Y_0 \fl Y$ and $j_0\colon M_0 \fl M$ denote the corresponding closed immersions of fibers. 

We now head towards proving the main result of this section, which asserts that there is an equivalence between the category of singularities of $Y_0$ and the $G$-equivariant category of singularities of $M_0$. First, however, we must provide preliminary results. Let us denote by $p_Y$ and $p_M$ the projections of the fiber product $Y \times_{\Cx}M$ onto its factors so that we have the following cartesian diagram:
$$
\xymatrix{ & Y \times_{\Cx}M \ar[dr]^-{p_M} \ar[dl]_-{p_Y} & \\
              Y  \ar[dr]_-{g}&  & M \ar[dl]^-{f} \\ 
              &  \Cx  & }
$$
The universal sheaf $\Os_{\Zs}$ on $Y \times M$ is actually supported on the closed subscheme $j\colon  Y \times_{\Cx}M \hookrightarrow Y \times M$. Thus there is a sheaf $\Gs$ on $Y \times_{\Cx}M$, flat over $Y$, such that $\Os_{\Zs}=j_* \Gs$.

Let $\Dd(Y_0)$ denote the bounded derived category of coherent sheaves on $Y_0$ and $\Dd^G(M_0)$ the bounded derived category of $G$-equivariant coherent sheaves on $M_0$. Write $k_0$ for the natural immersion $Y_0 \times M_0 \hookrightarrow Y \times_{\Cx}M$. Then $\Gs_0\spdot=\Ld k_0^* \Gs$ has finite homological dimension and we may define a functor $\Psi_0\colon \Dd^G(M_0) \fl \Dd(Y_0)$ by the formula
$$
\Psi_0(-)=[\Rd \pi_{Y_0 *}(\Gs_0\spdot \otimes^{\Ld} \pi_{M_0}^*(- ))]^G,
$$
where $\pi_{Y_0}$ and $\pi_{M_0}$ are the projections of $Y_0 \times M_0$ to $Y_0$ and $M_0$. That the functor $\Rd \pi_{Y_0 *}(\Gs_0\spdot \otimes^{\Ld}-)$ takes $\Dd^G(Y_0 \times M_0)$ to $\Dd^G(Y_0)$ can easily be seen from the argument of \cite[Lemma~2.1]{CH02} since the support of $\Gs_0\spdot$ is proper over $Y_0$.

We obtain a useful and probably well-known result, a version of which can be found in \cite[Lemma~6.1]{CH02}.

\begin{lem}\label{lem7.3}
There is a natural isomorphism of functors:
$$
i_{0 *}\Psi_0(-) \cong \Psi j_{0 *}(-).
$$
\end{lem}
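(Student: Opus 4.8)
\emph{Proof sketch.} The plan is to peel off the $G$-invariants functor, prove the compatibility for the underlying ``naive'' functors, and reduce everything to a short chain of flat base change and projection formula isomorphisms, in the spirit of \cite[Lemma~6.1]{CH02}. Put $\widetilde{\Psi}(-)=\Rd\pi_{Y*}(\Os_{\Zs}\otimes^{\Ld}\pi_M^*(-))$ and $\widetilde{\Psi}_0(-)=\Rd\pi_{Y_0*}(\Gs_0^{\bullet}\otimes^{\Ld}\pi_{M_0}^*(-))$, so that $\Psi=[\widetilde{\Psi}(-)]^G$ and $\Psi_0=[\widetilde{\Psi}_0(-)]^G$. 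Since $G$ acts trivially on $Y$ and on $Y_0$, the functor $[-]^G$ is exact and commutes with $i_{0*}$, so it is enough to construct a natural isomorphism $i_{0*}\widetilde{\Psi}_0(-)\cong\widetilde{\Psi}\,j_{0*}(-)$ of functors $\Dd^G(M_0)\fl\Dd^G(Y)$ and then apply $[-]^G$. Every sheaf and morphism below is $G$-equivariant (with $G$ acting through the $M$-factors and trivially through the $Y$-factors) and all the canonical isomorphisms are $G$-equivariant, so the equivariant bookkeeping is automatic.

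First I would introduce the closed immersions $j_M=\id_Y\times j_0\colon Y\times M_0\hookrightarrow Y\times M$ and $a=i_0\times\id_{M_0}\colon Y_0\times M_0\hookrightarrow Y\times M_0$, together with the projection $p\colon Y\times M_0\fl M_0$. Here $Y\times M_0$ is the Cartier divisor in $Y\times M$ cut out by $f\circ\pi_M$, the square with vertices $Y\times M_0,\,Y\times M,\,M_0,\,M$ and arrows $j_M,\,\pi_M,\,j_0,\,p$ is cartesian with $\pi_M$ flat, and one has the evident identities of morphisms
$$
j\circ k_0=j_M\circ a,\qquad \pi_Y\circ j_M\circ a=i_0\circ\pi_{Y_0},\qquad p\circ a=\pi_{M_0}.
$$
Granting momentarily the key isomorphism
\begin{equation}\label{keytor}
\Ld j_M^*(j_*\Gs)\cong a_*\Gs_0^{\bullet},
\end{equation}
the computation runs, for $\As\in\Dd^G(M_0)$, as
\begin{align*}
\widetilde{\Psi}(j_{0*}\As)
&=\Rd\pi_{Y*}\bigl(j_*\Gs\otimes^{\Ld}\pi_M^*j_{0*}\As\bigr)
  \cong\Rd\pi_{Y*}\bigl(j_*\Gs\otimes^{\Ld}j_{M*}p^*\As\bigr)\\
&\cong\Rd\pi_{Y*}j_{M*}\bigl(\Ld j_M^*(j_*\Gs)\otimes^{\Ld}p^*\As\bigr)
  \cong\Rd\pi_{Y*}j_{M*}\bigl(a_*\Gs_0^{\bullet}\otimes^{\Ld}p^*\As\bigr)\\
&\cong\Rd\pi_{Y*}j_{M*}a_*\bigl(\Gs_0^{\bullet}\otimes^{\Ld}\pi_{M_0}^*\As\bigr)
  \cong i_{0*}\Rd\pi_{Y_0*}\bigl(\Gs_0^{\bullet}\otimes^{\Ld}\pi_{M_0}^*\As\bigr)
  =i_{0*}\widetilde{\Psi}_0(\As),
\end{align*}
where the isomorphisms are, in order: $\Os_{\Zs}=j_*\Gs$ together with flat base change along the cartesian square ($\pi_M$ flat); the projection formula for $j_M$; the key isomorphism \eqref{keytor}; the projection formula for $a$ together with $p\circ a=\pi_{M_0}$ (and $\pi_{M_0}$ flat); and $\pi_Y\circ j_M\circ a=i_0\circ\pi_{Y_0}$ with $\Rd i_{0*}=i_{0*}$. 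Boundedness and coherence are ensured throughout by the finite homological dimension of $\Os_{\Zs}$ and of $\Gs_0^{\bullet}$ and by properness of $\Zs$ (resp.\ of the support of $\Gs_0^{\bullet}$) over $Y$ (resp.\ $Y_0$), as in \cite[Lemma~2.1]{CH02}. Naturality of the composite follows from that of each step, and applying $[-]^G$ finishes the proof.

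The step I expect to be the main obstacle is \eqref{keytor}, and it is the only place where the geometry enters. The plan there is as follows. Since $Y$ is an integral scheme and $g$ is non-constant, $g$ is a non-zero divisor on $\Os_Y$; as $\Gs$ is flat over $Y$, the injection $\Os_Y\xrightarrow{\,g\,}\Os_Y$ pulls back to an injection $\Gs\xrightarrow{\,\theta\,}\Gs$, where $\theta:=p_Y^*g=p_M^*f$ is the common function on $Y\times_\Cx M$ obtained by pulling back the coordinate on $\Cx$; thus $\theta$ is a non-zero divisor on $\Gs$. Now $Y_0\times M_0$ sits inside $Y\times_\Cx M$ as the Cartier divisor $\{\theta=0\}$, so its Koszul resolution $[\,\Gs\xrightarrow{\,\theta\,}\Gs\,]$ has no homology in negative degree; this shows simultaneously that $\Gs_0^{\bullet}=\Ld k_0^*\Gs$ is the sheaf $\Gs/\theta\Gs$ concentrated in degree $0$, and that $\Ld j_M^*(j_*\Gs)$, computed by the Koszul resolution $[\,j_*\Gs\xrightarrow{\,f\circ\pi_M\,}j_*\Gs\,]$ of $\Os_{Y\times M_0}$, reduces to the sheaf $j_*\Gs/(f\circ\pi_M)\,j_*\Gs=a_*(\Gs/\theta\Gs)=a_*\Gs_0^{\bullet}$, which is \eqref{keytor}. (Equivalently, one may phrase this as Tor-independence of $Y\times_\Cx M$ and $Y\times M_0$ inside $Y\times M$, which also follows from a dimension count, $Y\times_\Cx M$ being a Cohen--Macaulay hypersurface of dimension $2n-1$ while $Y_0\times M_0$ has dimension $2n-2$.) This is precisely the equivariant form of the Tor-vanishing behind \cite[Lemma~6.1]{CH02}; everything else is formal, and since $\Gs$, $\Gs_0^{\bullet}$, $j_M$, $a$, $p$ all carry their canonical $G$-equivariant structure, \eqref{keytor} and the whole chain are $G$-equivariant, which legitimises the final passage to $[-]^G$.
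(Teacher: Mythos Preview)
Your argument is correct, but it takes a different route from the paper. The paper works entirely through the fibre product $Y\times_{\Cx}M$: it uses the cartesian square
\[
\xymatrix{Y_0\times M_0 \ar[r]^-{k_0}\ar[d]_-{\pi_{M_0}} & Y\times_{\Cx}M \ar[d]^-{p_M}\\ M_0 \ar[r]_-{j_0} & M}
\]
and flat base change along $p_M$ (flat because $g$ is flat) to get $k_{0*}\pi_{M_0}^{*}\cong p_M^{*}j_{0*}\cong \Ld j^{*}\pi_M^{*}j_{0*}$, then applies the projection formula for $k_0$ and for $j$ to collapse $j_{*}k_{0*}(\Ld k_0^{*}\Gs\otimes^{\Ld}\pi_{M_0}^{*}(-))$ directly to $\Os_{\Zs}\otimes^{\Ld}\pi_M^{*}j_{0*}(-)$. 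No explicit Tor computation is needed, and $\Gs_0\spdot$ is treated as a complex throughout.

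You instead pass through the auxiliary space $Y\times M_0$, use flat base change along $\pi_M$, and isolate all the geometric content in your key isomorphism $\Ld j_M^{*}(j_{*}\Gs)\cong a_{*}\Gs_0\spdot$, which you prove by a Koszul/Tor-vanishing argument resting on flatness of $\Gs$ over $Y$ and the fact that $g$ is a non--zero divisor. This is a legitimate and self-contained alternative; it has the side benefit of showing that $\Gs_0\spdot$ is in fact a sheaf (not just a complex), at the cost of one extra step. The paper's route is a bit shorter because choosing $Y\times_{\Cx}M$ as the intermediate space lets the two projection formulas for $k_0$ and $j$ do all the work.
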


\begin{proof}
We first note that there exist a natural isomorphism between the functors
$$
\xymatrix@C-1mm{\Dd^G(Y_0) \ar[r]^-{[-]^G} & \Dd(Y_0) \ar[r]^-{i_{0 *}} & \Dd(Y)}
$$
and
$$
\xymatrix@C-1mm{\Dd^G(Y_0) \ar[r]^-{i_{0 *}} &\Dd^G(Y) \ar[r]^-{[-]^G} & \Dd(Y)}.
$$
The cartesian diagram
$$
\xymatrix{Y_0 \times M_0 \ar[r]^-{k_0} \ar[d]_-{\pi_{M_0}} & Y \times_{\Cx} M \ar[d]^-{p_M} \\
               M_0 \ar[r]_-{j_0} & M}
$$
shows that
$$
k_{0 *}\pi_{M_0}^*(- ) \cong p_{M}^*j_{0*}(- ) \cong  \Ld j^* \pi_M^*j_{0*}(- ).
$$
By the projection formula, we can then write
\begin{align*}
j_* k_{0*}(\Ld k_0^*\Gs \otimes^{\Ld} \pi_{M_0}^*(- )) &\cong j_* (\Gs \otimes^{\Ld} k_{0 *}\pi_{M_0}^*(- )) \\
& \cong j_* (\Gs \otimes^{\Ld} \Ld j^* \pi_M^*j_{0*}(- )) \\
& \cong j_*\Gs \otimes^{\Ld}  \pi_M^*j_{0*}(- ) \\
& \cong \Os_{\Zs} \otimes^{\Ld}  \pi_M^*j_{0*}(- ).
\end{align*}
Putting these observations together, we obtain the desired isomorphism:
\begin{align*}
i_{0 *}\Psi_0(-) &= i_{0*}[\Rd \pi_{Y_0 *}(\Gs_0\spdot \otimes^{\Ld} \pi_{M_0}^*(- ))]^G \\
&\cong [i_{0*}\Rd \pi_{Y_0 *}(\Gs_0\spdot \otimes^{\Ld} \pi_{M_0}^*(- ))]^G \\
&\cong [\Rd \pi_{Y*}(i_0 \times j_0)_*(\Gs_0\spdot \otimes^{\Ld} \pi_{M_0}^*(- ))]^G \\
&\cong [\Rd \pi_{Y*}j_* k_{0*}(\Ld k_0^*\Gs \otimes^{\Ld} \pi_{M_0}^*(- ))]^G \\
&\cong [\Rd \pi_{Y*}(\Os_{\Zs} \otimes^{\Ld}  \pi_M^*j_{0*}(- ))]^G \\
&=\Psi j_{0 *}(-). \qedhere
\end{align*}
\end{proof}

We want now to consider a correspondence in the opposite direction. The main problem is the right adjoint to $\Rd \pi_{Y_0*}$ as $\pi_{Y_0}$ is manifestly non-proper. However, using Deligne's construction of $\pi_{Y_0}^!$ in the context of general Grothendieck duality theory (cf.~\cite{Del66,Sas04,Lip08,Log08}) we can still obtain a right adjoint to $\Rd \pi_{Y_0*}$ for the full subcategory of $\Dd^G(Y_0 \times M_0)$ consisting of objects whose support is proper over $Y_0$. Let us see how this comes about.

Let $\overline{M}_0$ be the closure of $M_0$ in the projective space $\Px^n$. Then the map $\pi_{Y_0}$ factorizes as $\pi_{Y_0}=\overline{\pi}_{Y_0} \iota$ where $\iota\colon  Y_0 \times M_0 \hookrightarrow Y_0 \times \overline{M}_0$ is an open immersion and $\overline{\pi}_{Y_0}\colon Y_0 \times \overline{M}_0 \fl Y_0$ is the projection. In this way we get an extension of $\pi_{Y_0}$ which is a proper map. Now define the functor $\pi_{Y_0}^!\colon  \Dd(Y_0) \fl \Dd(Y_0 \times M_0)$ to be $\iota^* \overline{\pi}_{Y_0}^!$. A reasoning as in \cite[Lemma~4]{Log08} shows that there is a functorial isomorphism
$$
\uHom_{\Dd(Y_0)}(\Rd \pi_{Y_0*} \Es\spdot,\Fs\spdot) \cong \uHom_{\Dd(Y_0 \times M_0)}(\Es\spdot,\pi_{Y_0}^!\Fs\spdot),
$$ 
for every object $\Es\spdot$ in $\Dd(Y_0 \times M_0)$ whose support is proper over $Y_0$ and any $\Fs\spdot$ in $\Dd(Y_0)$. Furthermore, since the map $\pi_{Y_0}$ is of finite Tor-dimension and of finite type, it follows from \cite[Theorem~4.9.4]{Lip08} that there is a functorial isomorphism 
$$
\pi_{Y_0}^! \Fs\spdot \cong \pi_{Y_0}^! \Os_{Y_0} \otimes^{\Ld}\pi_{Y_0}^* \Fs\spdot,
$$
for any $\Fs\spdot \in \Dd(Y_0)$. Let us remark that the above extends straightforwardly to the corresponding $G$-equivariant categories. 

Let $\Phi_0\colon  \Dd(Y_0) \fl \Dd^G(M_0)$ denote the functor in the other direction defined as
$$
\Phi_0(-)=\Rd \pi_{M_0 *}\Rd\Hs om\spdot_{Y_0 \times M_0}(\Gs_0\spdot, \pi_{Y_0}^!(- \otimes \rho_0)).
$$
Observe that the fact that $\tau$ is proper implies that the support of $\Gs_0\spdot$ is proper over $M_0$. Arguing as before one can check that $\Rd \pi_{M_0 *}\Rd\Hs om\spdot_{Y_0 \times M_0}(\Gs_0\spdot,-)$ sends $\Dd^G(Y_0 \times M_0)$ to $\Dd^G(M_0)$, so $\Phi_0$ is well-defined. 

The following is an immediate consequence of the definition.

\begin{lem}
$\Phi_0$ is right adjoint to $\Psi_0$.
\end{lem}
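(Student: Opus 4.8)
The statement is essentially formal: $\Psi_0$ is visibly a composite of functors each of which has a known right adjoint, and $\Phi_0$ is the composite of those right adjoints taken in the reverse order. To make this precise I would fix objects $\As\spdot\in\Dd^G(M_0)$ and $\Bs\spdot\in\Dd(Y_0)$ and assemble a functorial isomorphism
\[
\uHom_{\Dd(Y_0)}(\Psi_0\As\spdot,\Bs\spdot)\ \cong\ \uHom_{\Dd^G(M_0)}(\As\spdot,\Phi_0\Bs\spdot)
\]
out of a short chain of elementary adjunctions, reading $\Psi_0=[-]^G\circ\Rd\pi_{Y_0*}\circ(\Gs_0\spdot\otimes^{\Ld}-)\circ\pi_{M_0}^*$.

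First I would move the $G$-invariants out of the way. Since $-\otimes\rho_0$ and $[-]^G$ are mutually adjoint (Section~\ref{sec5}), the functor $[-]^G\colon\Dd^G(Y_0)\to\Dd(Y_0)$ is in particular \emph{left} adjoint to $-\otimes\rho_0$, which gives
\[
\uHom_{\Dd(Y_0)}(\Psi_0\As\spdot,\Bs\spdot)\ \cong\ \uHom_{\Dd^G(Y_0)}\bigl(\Rd\pi_{Y_0*}(\Gs_0\spdot\otimes^{\Ld}\pi_{M_0}^*\As\spdot),\ \Bs\spdot\otimes\rho_0\bigr).
\]
The support of $\Gs_0\spdot\otimes^{\Ld}\pi_{M_0}^*\As\spdot$ is contained in that of $\Gs_0\spdot$, hence proper over $Y_0$ (the same property already used to make $\Psi_0$ well defined), so the $G$-equivariant version of the adjunction $(\Rd\pi_{Y_0*},\pi_{Y_0}^!)$ constructed above via Deligne's extension applies and yields
\[
\uHom_{\Dd^G(Y_0)}\bigl(\Rd\pi_{Y_0*}(\Gs_0\spdot\otimes^{\Ld}\pi_{M_0}^*\As\spdot),\ \Bs\spdot\otimes\rho_0\bigr)\ \cong\ \uHom_{\Dd^G(Y_0\times M_0)}\bigl(\Gs_0\spdot\otimes^{\Ld}\pi_{M_0}^*\As\spdot,\ \pi_{Y_0}^!(\Bs\spdot\otimes\rho_0)\bigr).
\]

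Applying next the derived tensor--hom adjunction to the right-hand side --- legitimate because $\Gs_0\spdot$ has finite homological dimension --- peels off $\Gs_0\spdot$ and identifies this group with $\uHom_{\Dd^G(Y_0\times M_0)}\bigl(\pi_{M_0}^*\As\spdot,\ \Rd\Hs om\spdot_{Y_0\times M_0}(\Gs_0\spdot,\pi_{Y_0}^!(\Bs\spdot\otimes\rho_0))\bigr)$, and then the adjunction $(\pi_{M_0}^*,\Rd\pi_{M_0*})$ rewrites it as $\uHom_{\Dd^G(M_0)}\bigl(\As\spdot,\ \Rd\pi_{M_0*}\Rd\Hs om\spdot_{Y_0\times M_0}(\Gs_0\spdot,\pi_{Y_0}^!(\Bs\spdot\otimes\rho_0))\bigr)$, which is precisely $\uHom_{\Dd^G(M_0)}(\As\spdot,\Phi_0\Bs\spdot)$ by the definition of $\Phi_0$. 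Composing the four isomorphisms gives the lemma, and naturality in $\As\spdot$ and $\Bs\spdot$ is inherited from that of each individual adjunction.

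The only step that is not purely formal is the appeal to $(\Rd\pi_{Y_0*},\pi_{Y_0}^!)$: because $\pi_{Y_0}$ is non-proper one must check that $\Gs_0\spdot\otimes^{\Ld}\pi_{M_0}^*\As\spdot$ lands in the full subcategory on which this adjunction is available --- that is, that its support is proper over $Y_0$ --- and that Deligne's construction together with its adjunction passes to the $G$-equivariant categories. Both points were arranged in the discussion preceding the lemma, so I expect this verification, rather than any conceptual difficulty, to be the only thing needing attention; everything else is routine manipulation within Grothendieck's formalism, to be spelled out only as far as keeping track of the $G$-equivariance requires.
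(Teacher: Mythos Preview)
Your proof is correct and is essentially the paper's own argument run in the opposite direction: the paper starts from $\uHom_{\Dd^G(M_0)}(\Fs\spdot,\Phi_0\Es\spdot)$ and unwinds the same four adjunctions ($(\pi_{M_0}^*,\Rd\pi_{M_0*})$, tensor--hom, the Deligne duality $(\Rd\pi_{Y_0*},\pi_{Y_0}^!)$ with the proper-support caveat, and $([-]^G,-\otimes\rho_0)$) to reach $\uHom_{\Dd(Y_0)}(\Psi_0\Fs\spdot,\Es\spdot)$. Your identification of the only nontrivial step --- the support condition needed for the $\pi_{Y_0}^!$ adjunction --- matches the paper's single explanatory remark exactly.
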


\begin{proof}
Indeed, for any $\Es\spdot \in \Dd(Y_0)$ and $\Fs\spdot \in\Dd^G(M_0)$ one has a sequence of isomorphisms:
\begin{align*}
 &\uHom_{\Dd^G(M_0)} (\Fs\spdot, \Phi_0 \Es\spdot) \\
&\quad = \uHom_{\Dd^G(M_0)} (\Fs\spdot,\Rd \pi_{M_0 *}\Rd\Hs om\spdot_{Y_0 \times M_0}(\Gs_0\spdot, \pi_{Y_0}^!(\Es\spdot \otimes \rho_0))) \\
&  \quad \cong \uHom_{\Dd^G(Y_0 \times M_0)} (\pi_{M_0}^*\Fs\spdot,\Rd\Hs om\spdot_{Y_0 \times M_0}(\Gs_0\spdot, \pi_{Y_0}^!(\Es\spdot \otimes \rho_0))) \\
&  \quad \cong \uHom_{\Dd^G(Y_0 \times M_0)}(\Gs_0\spdot \otimes^{\Ld} \pi_{M_0}^*\Fs\spdot, \pi_{Y_0}^!(\Es\spdot \otimes \rho_0))\\
& \quad \cong \uHom_{\Dd^G(Y_0)}(\Rd \pi_{Y_0 *}(\Gs_0\spdot \otimes^{\Ld} \pi_{M_0}^*\Fs\spdot), \Es\spdot \otimes \rho_0)\\
& \quad \cong \uHom_{\Dd(Y_0 )}([\Rd \pi_{Y_0 *}(\Gs_0\spdot \otimes^{\Ld} \pi_{M_0}^*\Fs\spdot)]^G, \Es\spdot) \\
& \quad \cong \uHom_{\Dd(Y_0 )}(\Psi_0 \Fs\spdot, \Es\spdot).
\end{align*} 
Here, the third isomorphism is the aforementioned duality for $\pi_{Y_0}$, which can be applied since $\Gs_0\spdot$ has proper support over $Y_0$.
\end{proof}

We now make an observation to be applied in the subsequent argument.

\begin{lem}\label{lem7.5}
There is an isomorphism:
$$
\Ld k_0^* \Rd\Hs om\spdot_{Y \times_{\Cx} M}(\Gs, p_Y^! \Os_{Y}) \cong \Rd\Hs om\spdot_{Y_0 \times M_0}(\Gs_0\spdot, \pi_{Y_0}^! \Os_{Y_0}).
$$
\end{lem}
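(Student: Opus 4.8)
The statement is a base-change compatibility for a twisted dualizing complex: we must show that restricting $\Rd\Hs om\spdot_{Y\times_\Cx M}(\Gs, p_Y^!\Os_Y)$ along $k_0$ recovers $\Rd\Hs om\spdot_{Y_0\times M_0}(\Gs_0\spdot, \pi_{Y_0}^!\Os_{Y_0})$. The strategy has two halves: first move the $\Ld k_0^*$ past the internal $\Rd\Hs om\spdot$, and second identify $\Ld k_0^* p_Y^!\Os_Y$ with $\pi_{Y_0}^!\Os_{Y_0}$. For the first half, I would use the standard compatibility of derived pullback with internal Hom when the source object is perfect (or has finite Tor-dimension): since $\Gs$ is flat over $Y$ and sits inside the nonsingular $Y\times_\Cx M$ appropriately, $\Gs$ has finite homological dimension, so $\Ld k_0^*\Rd\Hs om\spdot(\Gs,-)\cong \Rd\Hs om\spdot(\Ld k_0^*\Gs, \Ld k_0^*(-))=\Rd\Hs om\spdot(\Gs_0\spdot,\Ld k_0^*(-))$, using the very definition $\Gs_0\spdot=\Ld k_0^*\Gs$. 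This reduces the claim to the identity $\Ld k_0^* p_Y^!\Os_Y\cong \pi_{Y_0}^!\Os_{Y_0}$.

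For the second half I would exploit the two factorizations already set up in the text. Recall $\pi_{Y_0}=p_Y\circ k_0$ (reading off the cartesian square $Y_0\times M_0 \xrightarrow{k_0} Y\times_\Cx M \xrightarrow{p_Y} Y$, noting the bottom map of that square is $i_0$, and $\pi_{Y_0}$ is the projection composed with $i_0$ on the base — more precisely $i_0\circ\pi_{Y_0}=p_Y\circ k_0$, so after applying the twist formula we may work relatively over $Y_0$). Using $\pi_{Y_0}^!\Fs\spdot\cong \pi_{Y_0}^!\Os_{Y_0}\otimes^{\Ld}\pi_{Y_0}^*\Fs\spdot$ (the Lipman $\otimes$-twist quoted just above in the excerpt) together with the analogous formula for $p_Y$, both $\pi_{Y_0}^!\Os_{Y_0}$ and $p_Y^!\Os_Y$ are computed from the relative dualizing complexes of the respective projections. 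Since $\pi_{Y_0}$ and $p_Y$ are both morphisms of finite type and finite Tor-dimension, and $Y_0\times M_0\to Y\times_\Cx M$ is obtained by the base change $Y_0\hookrightarrow Y$ on the $Y$-factor (indeed $k_0$ fits the cartesian square with $p_Y$ over $i_0\colon Y_0\hookrightarrow Y$), the base-change theorem for twisted inverse images $\Ld k_0^* p_Y^!\Os_Y\cong \pi_{Y_0}^!\,\Ld i_0^*\Os_Y=\pi_{Y_0}^!\Os_{Y_0}$ applies; I would cite \cite[Theorem~4.9.4]{Lip08} (or the flat base-change statement in \cite{Lip08,Sas04}) for exactly this compatibility, using the same Deligne-style extension $\overline{\pi}_{Y_0}$ through $Y_0\times\overline{M}_0$ that was introduced a few lines earlier so that $\pi_{Y_0}^!$ is genuinely defined.

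Assembling the two halves gives the claimed isomorphism. I would then remark that the whole argument is $G$-equivariant: $G$ acts on the $M$-factors only, commutes with all the projections, and the isomorphisms above are natural, so they descend to the equivariant categories exactly as was noted for the adjunction $\pi_{Y_0}^!$ in the paragraph preceding the lemma.

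**Main obstacle.** The delicate point is the base-change isomorphism $\Ld k_0^* p_Y^!\cong \pi_{Y_0}^!\Ld i_0^*$ in the non-proper setting: $\pi_{Y_0}$ is not proper, so $\pi_{Y_0}^!$ is only the Deligne-construction right adjoint on the subcategory of objects with proper support, and one must check that $\Os_{Y_0}$-type objects and the twisting sheaves behave well under the compactification $\overline{M}_0$, and that the flat base change $i_0$ is transverse to the compactifying immersion $\iota$. Concretely I expect to spend most of the effort verifying that $\iota^*\overline{\pi}_{Y_0}^!$ is compatible with $\Ld k_0^*$ via the corresponding compactification of $p_Y$ — i.e.\ a compatibility of Deligne's $(-)^!$ with open restriction and flat base change simultaneously — which is where \cite[Theorem~4.9.4]{Lip08} does the real work; everything else is formal manipulation of the projection formula and the $\otimes$-twist.
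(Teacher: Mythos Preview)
Your argument is correct and somewhat more direct than the paper's, but the route is genuinely different. The paper does not pull $\Ld k_0^*$ past the inner $\Rd\Hs om\spdot$; instead it pushes the whole statement forward by $k_{0*}$, uses the projection formula and the adjunction $(\Ld k_0^*,k_{0*})$ to rewrite both sides as $\Rd\Hs om\spdot_{Y\times_{\Cx}M}(\Gs,-)$ applied to $p_Y^! i_{0*}\Os_{Y_0}$ and $k_{0*}\pi_{Y_0}^!\Os_{Y_0}$ respectively, and then proves $p_Y^! i_{0*}\Os_{Y_0}\cong k_{0*}\pi_{Y_0}^!\Os_{Y_0}$ by a Yoneda argument using only the \emph{direct-image} base change $\Ld i_0^*\Rd p_{Y*}\cong \Rd\pi_{Y_0*}\Ld k_0^*$ (citing \cite{HLS06}) together with the adjunctions defining $p_Y^!$ and $\pi_{Y_0}^!$ on proper-support objects. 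In other words, the paper avoids invoking base change for $(-)^!$ as a black box and instead derives the needed special case from the more elementary base change for $\Rd(-)_*$.

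What your approach buys is concision: once you accept the Tor-independent base change $\Ld k_0^* p_Y^!\cong \pi_{Y_0}^!\Ld i_0^*$ from \cite{Lip08} (note that \cite[Theorem~4.9.4]{Lip08} is the $\otimes$-twist formula, not the base change; you want the Tor-independent base-change theorem elsewhere in Lipman's notes), the lemma drops out in two lines, and the perfectness of $\Gs$ handles the compatibility of $\Ld k_0^*$ with $\Rd\Hs om\spdot$ cleanly. What the paper's approach buys is that it stays closer to first principles: the only duality input beyond adjunction is the ordinary flat base change for pushforward, so the non-properness issue is localized to the places where the Deligne adjunction is explicitly used on proper-support test objects, rather than being hidden inside a compatibility-of-$(-)^!$-with-everything statement. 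Your identification of the main obstacle is accurate, and both routes ultimately rely on the same compactification trick to make sense of $\pi_{Y_0}^!$.
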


\begin{proof}
We have to prove that the natural morphism
$$
\Ld k_0^* \Rd\Hs om\spdot_{Y \times_{\Cx} M}(\Gs, p_Y^! \Os_{Y}) \lfl \Rd\Hs om\spdot_{Y_0 \times M_0}(\Ld k_0^* \Gs, \pi_{Y_0}^! \Os_{Y_0})
$$
is an isomorphism. Since $k_0$ is a closed immersion, it is enough to prove that the induced morphism
$$
k_{0*}\Ld k_0^* \Rd\Hs om\spdot_{Y \times_{\Cx} M}(\Gs, p_Y^! \Os_{Y}) \lfl k_{0*}\Rd\Hs om\spdot_{Y_0 \times M_0}(\Ld k_0^* \Gs, \pi_{Y_0}^! \Os_{Y_0})
$$
is an isomorphism. Consider the cartesian diagram
$$
\xymatrix{Y_0 \times M_0 \ar[r]^-{k_0} \ar[d]_-{\pi_{Y_0}} & Y \times_{\Cx} M \ar[d]^-{p_Y} \\
               Y_0 \ar[r]_-{i_0} & Y.}
$$
We have $k_{0*}\pi_{Y_0}^*(-) \cong p_Y^* i_{0 *}(-)$. By the projection formula, we deduce that the first member is isomorphic to
\begin{align*}
 &\Rd\Hs om\spdot_{Y \times_{\Cx} M}(\Gs, p_Y^! \Os_{Y}) \otimes^{\Ld} k_{0*} \Os_{Y_0 \times M_0} \\
&\qquad\cong  \Rd\Hs om\spdot_{Y \times_{\Cx} M}(\Gs, p_Y^! \Os_{Y}) \otimes^{\Ld} k_{0*}\pi_{Y_0}^* \Os_{Y_0} \\
&\qquad\cong  \Rd\Hs om\spdot_{Y \times_{\Cx} M}(\Gs, p_Y^! \Os_{Y}) \otimes^{\Ld} p_Y^* i_{0 *} \Os_{Y_0} \\
&\qquad\cong  \Rd\Hs om\spdot_{Y \times_{\Cx} M}(\Gs, p_Y^! i_{0 *} \Os_{Y_0})
\end{align*}
where the last step follows from the observation that $\Gs$ has finite homological dimension. The second member is isomorphic to
$$
\Rd\Hs om\spdot_{Y \times_{\Cx} M}( \Gs, k_{0*} \pi_{Y_0}^! \Os_{Y_0})
$$
by the adjoint property of $\Ld k_0^*$ and $k_{0*}$. Thus, we have to prove that the natural morphism
$$
 \Rd\Hs om\spdot_{Y \times_{\Cx} M}(\Gs, p_Y^! i_{0 *} \Os_{Y_0}) \lfl \Rd\Hs om\spdot_{Y \times_{\Cx} M}( \Gs, k_{0*} \pi_{Y_0}^! \Os_{Y_0})
$$
is an isomorphism. Then, it is enough to see that $p_Y^! i_{0 *} \Os_{Y_0} \cong k_{0*} \pi_{Y_0}^! \Os_{Y_0}$. This follows from the isomorphisms
\begin{align*}
\uHom_{\Dd(Y \times_{\Cx} M)}(\Es\spdot,p_Y^! i_{0 *} \Os_{Y_0} ) & \cong \uHom_{\Dd(Y)}(\Rd p_{Y*}\Es\spdot, i_{0 *} \Os_{Y_0} ) \\
& \cong \uHom_{\Dd(Y_0)}(\Ld i_{0}^*\Rd p_{Y*}\Es\spdot,\Os_{Y_0} ) \\
&  \cong \uHom_{\Dd(Y_0)}(\Rd \pi_{Y_0*}\Ld k_0^* \Es\spdot,\Os_{Y_0} ) \\
&  \cong \uHom_{\Dd(Y_0 \times M_0)}(\Ld k_0^* \Es\spdot,\pi_{Y_0}^!\Os_{Y_0} )\\
&  \cong \uHom_{\Dd(Y \times_{\Cx}M)}( \Es\spdot,k_{0*}\pi_{Y_0}^!\Os_{Y_0} )
\end{align*}
which hold for any object $\Es\spdot$ in  $\Dd(Y \times_{\Cx}M)$ whose support is proper over $Y$ (here we used the base change theorem for the above cartesian diagram; see \cite[Sect.~1]{HLS06}).
\end{proof}

Before stating our next result, it will be convenient to provide the following piece of information. As we pointed out earlier, there exist a functorial isomorphism 
$\pi_{Y_0}^!(-) \cong \pi_{Y_0}^! \Os_{Y_0} \otimes^{\Ld}\pi_{Y_0}^*(-)$. Using the fact that $\Gs\spdot_0$ has finite homological dimension we obtain an isomorphism
\begin{align*}
\Rd\Hs om\spdot_{Y_0 \times M_0}(\Gs_0\spdot, \pi_{Y_0}^!(-)) & \cong  \Rd\Hs om\spdot_{Y_0 \times M_0}(\Gs_0\spdot, \pi_{Y_0}^! \Os_{Y_0} \otimes^{\Ld}\pi_{Y_0}^*(-)) \\
& \cong \Rd\Hs om\spdot_{Y_0 \times M_0}(\Gs_0\spdot, \pi_{Y_0}^! \Os_{Y_0}) \otimes^{\Ld} \pi_{Y_0}^*(- ).
\end{align*}
Thus, denoting $\Ks_0\spdot=\Rd\Hs om\spdot_{Y_0 \times M_0}(\Gs_0\spdot, \pi_{Y_0}^! \Os_{Y_0})$, we can rewrite $\Phi_0$ as
$$
\Phi_0(-) \cong \Rd \pi_{M_0 *}(\Ks_0\spdot \otimes^{\Ld} \pi_{Y_0}^*(- \otimes \rho_0)).
$$
Combining these remarks with Lemma \ref{lem7.5} we have the following. 

\begin{lem}\label{lem7.6}
There is a natural isomorphism of functors:
$$
j_{0 *}\Phi_0(-) \cong \Phi i_{0 *}(-).
$$
\end{lem}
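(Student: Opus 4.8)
The plan is to imitate the proof of Lemma~\ref{lem7.3}, with the kernels $\Os_\Zs$ and $\Os_\Zs^\vee[n]$ (and the functors $\Psi$, $\Phi$) interchanged, and with Lemma~\ref{lem7.5} playing the role that the bare projection formula played there. Two preliminaries will be needed. First, exactly as in the proof of Lemma~\ref{lem7.3}, the pushforward $i_{0*}$ commutes with the functor $-\otimes\rho_0$ relating the $G$-equivariant and ordinary categories over $Y_0$ and $Y$ (which carry the trivial $G$-action); this is immediate from the isotypic decomposition. Second, one must identify the kernel of $\Phi$ as a pushforward from $Y\times_{\Cx}M$: writing $\Ks\spdot=\Rd\Hs om\spdot_{Y\times_{\Cx}M}(\Gs,p_Y^!\Os_Y)$ for the complex occurring in Lemma~\ref{lem7.5}, the claim is that there is a natural $G$-equivariant isomorphism $\Os_\Zs^\vee[n]\cong j_*\Ks\spdot$.

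Granting this, I would finish with a diagram chase along the cartesian squares already in play. On the one hand, the rewriting $\Phi_0(-)\cong\Rd\pi_{M_0*}(\Ks_0\spdot\otimes^{\Ld}\pi_{Y_0}^*(-\otimes\rho_0))$ from the remark preceding the statement, the isomorphism $\Ks_0\spdot\cong\Ld k_0^*\Ks\spdot$ of Lemma~\ref{lem7.5}, the identity $p_M k_0=j_0\pi_{M_0}$, the projection formula for the closed immersion $k_0$, and the base change $k_{0*}\pi_{Y_0}^*(-)\cong p_Y^*i_{0*}(-)$ used in the proof of Lemma~\ref{lem7.5} (together with the commutation of $i_{0*}$ with $-\otimes\rho_0$) combine to give
$$
j_{0*}\Phi_0(-)\;\cong\;\Rd p_{M*}\bigl(\Ks\spdot\otimes^{\Ld}p_Y^*(i_{0*}(-)\otimes\rho_0)\bigr).
$$
On the other hand, starting from $\Phi i_{0*}(-)=\Rd\pi_{M*}(\Os_\Zs^\vee[n]\otimes^{\Ld}\pi_Y^*(i_{0*}(-)\otimes\rho_0))$, substituting $\Os_\Zs^\vee[n]\cong j_*\Ks\spdot$, and using the projection formula for $j$ together with $\pi_Y j=p_Y$ and $\pi_M j=p_M$, one reaches the very same expression. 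Since every isomorphism invoked is $G$-equivariant, matching the two chains yields the asserted natural isomorphism $j_{0*}\Phi_0(-)\cong\Phi i_{0*}(-)$.

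It remains to establish the kernel identification, which I expect to be the only substantive point. Since $M=\Cx^n$, the projection $\pi_Y\colon Y\times M\fl Y$ is smooth of relative dimension $n$ with relative canonical sheaf $\pi_M^*\omega_M$, and $\omega_M$ carries the trivial $G$-linearization precisely because $G\subset\uSL(n,\Cx)$; hence $\pi_Y^!\Os_Y\cong\Os_{Y\times M}[n]$ $G$-equivariantly. As $p_Y=\pi_Y j$ one has $p_Y^!=j^!\pi_Y^!$, so $j^!\Os_{Y\times M}\cong(p_Y^!\Os_Y)[-n]$. Applying Grothendieck--Verdier duality for the finite morphism $j$ to $\Os_\Zs=j_*\Gs$ then gives
\begin{align*}
\Os_\Zs^\vee=\Rd\Hs om\spdot_{Y\times M}(j_*\Gs,\Os_{Y\times M})&\cong j_*\Rd\Hs om\spdot_{Y\times_{\Cx}M}(\Gs,j^!\Os_{Y\times M})\\
&\cong j_*(\Ks\spdot[-n]),
\end{align*}
and a shift by $n$ yields the claim. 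The delicate point is that the identification $\pi_Y^!\Os_Y\cong\Os_{Y\times M}[n]$ and the duality isomorphism for $j$ must be checked to be compatible with the $G$-linearizations; once that is done, everything else is the same projection-formula and base-change bookkeeping as in Lemma~\ref{lem7.3}, and the finiteness of the homological dimensions of $\Gs$ and $\Gs_0\spdot$ and the properness of the relevant supports have already been recorded in the text.
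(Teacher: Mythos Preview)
Your proposal is correct and follows essentially the same route as the paper. Both arguments rest on the kernel identification $j_*\Rd\Hs om\spdot_{Y\times_{\Cx}M}(\Gs,p_Y^!\Os_Y)\cong\Os_\Zs^\vee[n]$ via Grothendieck duality for $j$ and the isomorphism $\pi_Y^!\Os_Y\cong\Os_{Y\times M}[n]$, then feed in Lemma~\ref{lem7.5}, the projection formula, and the base change $k_{0*}\pi_{Y_0}^*\cong p_Y^*i_{0*}$; the only cosmetic difference is that the paper pushes everything forward to $Y\times M$ via $j_*k_{0*}=(i_0\times j_0)_*$ and computes there, whereas you stop one level down at $Y\times_{\Cx}M$ and match both sides against $\Rd p_{M*}(\Ks\spdot\otimes^{\Ld}p_Y^*(i_{0*}(-)\otimes\rho_0))$.
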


\begin{proof}
The argument is very similar to that used in the proof of Lemma~\ref{lem7.3}. We give it for the sake of completeness. To begin with, we observe that there is a natural isomorphism between the functors
$$
\xymatrix{\Dd(Y_0) \ar[r]^-{-\otimes \rho_0} &\Dd^G(Y_0) \ar[r]^-{i_{0 *}} & \Dd^G(Y)}
$$
and
$$
\xymatrix{\Dd(Y_0) \ar[r]^-{i_{0 *}} & \Dd(Y) \ar[r]^-{-\otimes \rho_0} & \Dd^G(Y).}
$$
Invoking Lemma \ref{lem7.5} and the projection formula, we obtain that
\begin{align*}
&j_* k_{0*}(\Ks\spdot_0 \otimes^{\Ld} \pi_{Y_0}^*(- \otimes \rho_0))\\
& \qquad \cong  j_* k_{0*}(\Ld k_0^* \Rd\Hs om\spdot_{Y \times_{\Cx} M}(\Gs, p_Y^! \Os_{Y}) \otimes^{\Ld} \pi_{Y_0}^*(- \otimes \rho_0))\\
& \qquad \cong j_* ( \Rd\Hs om\spdot_{Y \times_{\Cx} M}(\Gs, p_Y^! \Os_{Y}) \otimes^{\Ld}k_{0*}\pi_{Y_0}^*(- \otimes \rho_0)) \\
& \qquad \cong j_* ( \Rd\Hs om\spdot_{Y \times_{\Cx} M}(\Gs, p_Y^! \Os_{Y}) \otimes^{\Ld}p_{Y}^*i_{0*}(- \otimes \rho_0)) \\
&  \qquad \cong j_* ( \Rd\Hs om\spdot_{Y \times_{\Cx} M}(\Gs, p_Y^! \Os_{Y}) \otimes^{\Ld}\Ld j^* \pi_{Y}^*i_{0*}(- \otimes \rho_0)) \\
& \qquad \cong j_* \Rd\Hs om\spdot_{Y \times_{\Cx} M}(\Gs,  p_Y^!  \Os_{Y}) \otimes^{\Ld} \pi_{Y}^*i_{0*}(- \otimes \rho_0).
\end{align*}
On the other hand, by relative Grothendieck duality, we get
\begin{align*}
j_* \Rd\Hs om\spdot_{Y \times_{\Cx} M}(\Gs,  p_Y^!  \Os_{Y}) & \cong j_* \Rd\Hs om\spdot_{Y \times_{\Cx} M}(\Gs, j^! \pi_Y^!  \Os_{Y})\\
& \cong \Rd\Hs om\spdot_{Y \times M}(j_*\Gs, \pi_Y^! \Os_{Y}) \\
& \cong \Rd\Hs om\spdot_{Y \times M}(\Os_{\Zs}, \pi_Y^! \Os_{Y})\\
& \cong \Os^{\vee}_{\Zs}[n],
\end{align*}
where we used the isomorphism $\pi_Y^! \Os_{Y} \cong \Os_{Y \times M}[n]$ which follows from the triviality of the canonical bundle $\omega_M$.
Hence
\begin{align*}
j_* k_{0*}(\Ks\spdot_0 \otimes^{\Ld} \pi_{Y_0}^*(- \otimes \rho_0)) \cong \Os^{\vee}_{\Zs}[n] \otimes^{\Ld} \pi_{Y}^*i_{0*}(- \otimes \rho_0).
\end{align*}
Wrapping things up, we conclude that
\begin{align*}
j_{0 *}\Phi_0(-) &= j_{0*}\Rd \pi_{M_0 *}(\Ks\spdot_0 \otimes^{\Ld} \pi_{Y_0}^*(- \otimes \rho_0)) \\
&\cong \Rd \pi_{M*}(i_0 \times j_0)_*(\Ks\spdot_0 \otimes^{\Ld} \pi_{Y_0}^*(- \otimes \rho_0)) \\
&\cong  \Rd \pi_{M*}j_* k_{0*}(\Ks\spdot_0 \otimes^{\Ld}\pi_{Y_0}^*(- \otimes \rho_0)) \\
&\cong  \Rd \pi_{M*}(\Os^{\vee}_{\Zs}[n] \otimes^{\Ld} \pi_{Y}^*(i_{0*}(-) \otimes \rho_0)) \\
&=\Phi i_{0*}(-),
\end{align*}
as asserted.
\end{proof}

The following result is the goal we have been striving for throughout this whole section.

\begin{thm}\label{thmMcKayLG}
Under Assumption~\ref{assumBKR}, the functors $\Phi_0$ and $\Psi_0$ define inverse equivalences between $\Dd(Y_0)$ and $\Dd^G(M_0)$. These equivalences induce equivalences $\overline{\Phi}_0$ and $\overline{\Psi}_0$ between $\Dd_{\Sg}(Y_0)$ and $\Dd_{\Sg}^G(M_0)$.
\end{thm}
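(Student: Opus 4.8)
The plan is to proceed in two stages. First I will show that $\Phi_0$ and $\Psi_0$ are mutually inverse equivalences $\Dd(Y_0) \cong \Dd^G(M_0)$; then I will check that they carry perfect complexes to perfect complexes and invoke Lemma~\ref{lem3.1} to pass to the categories of singularities.

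For the first stage, recall that $\Phi_0$ has been shown to be right adjoint to $\Psi_0$, so it suffices to prove that the unit $\eta\colon \id_{\Dd^G(M_0)} \fl \Phi_0\Psi_0$ and the counit $\varepsilon\colon \Psi_0\Phi_0 \fl \id_{\Dd(Y_0)}$ of this adjunction are isomorphisms. The key point is that the pushforwards $i_{0*}$ and $j_{0*}$ along the closed immersions $i_0\colon Y_0 \fl Y$ and $j_0\colon M_0 \fl M$ are exact and faithful, hence conservative on the bounded derived categories (a morphism is an isomorphism as soon as its image under such a pushforward is). Applying $j_{0*}$ to $\eta$ and combining Lemma~\ref{lem7.6}, Lemma~\ref{lem7.3} and the identity $\Phi\Psi \cong \id$ of Assumption~\ref{assumBKR} yields a chain of isomorphisms
$$
j_{0*}\Phi_0\Psi_0 \cong \Phi\, i_{0*}\Psi_0 \cong \Phi\Psi\, j_{0*} \cong j_{0*},
$$
under which $j_{0*}\eta$ is carried to the unit of the adjunction $(\Psi,\Phi)$ whiskered by $j_{0*}$; the latter is an isomorphism because $\Psi$ and $\Phi$ are inverse equivalences. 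Hence $j_{0*}\eta$ is an isomorphism, and conservativity of $j_{0*}$ forces $\eta$ to be one. The counit $\varepsilon$ is treated symmetrically, applying $i_{0*}$ and using Lemma~\ref{lem7.3}, Lemma~\ref{lem7.6} and $\Psi\Phi \cong \id$.

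For the second stage I will use the standard fact that a bounded complex $\Es\spdot$ of coherent sheaves on $Y_0$ is perfect if and only if there is an $N$ with $\uHom_{\Dd(Y_0)}(\Es\spdot, \Gs\spdot[i]) = 0$ for all $i > N$ and all $\Gs\spdot \in \Dd(Y_0)$ — which follows by a dimension shift from the observation that a coherent sheaf with no higher $\uExt$ into any coherent sheaf must be locally free — together with its $G$-equivariant analogue on $M_0$. Since $\Phi_0$ is an equivalence it is essentially surjective and induces isomorphisms on all $\uHom$ spaces, so $\Es\spdot$ is perfect exactly when $\Phi_0\Es\spdot$ is, and likewise for $\Psi_0$. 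Consequently $\Phi_0(\uPerf(Y_0)) = \uPerf([M_0/G])$ and $\Psi_0(\uPerf([M_0/G])) = \uPerf(Y_0)$. Now the adjoint pair $(\Psi_0,\Phi_0)$, together with these inclusions, satisfies the hypotheses of Lemma~\ref{lem3.1}, so it descends to an adjoint pair $\overline{\Psi}_0 \colon \Dd^G_{\Sg}(M_0) \fl \Dd_{\Sg}(Y_0)$ and $\overline{\Phi}_0\colon \Dd_{\Sg}(Y_0) \fl \Dd^G_{\Sg}(M_0)$; since $\Phi_0$ and $\Psi_0$ are inverse equivalences carrying the chosen subcategories onto one another, the induced functors on the Verdier quotients are inverse equivalences as well.

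The step I expect to be the real obstacle is the compatibility assertion in the first stage: that after the identifications coming from Lemmas~\ref{lem7.3} and~\ref{lem7.6} the morphism $j_{0*}\eta$ (respectively $i_{0*}\varepsilon$) is genuinely the whiskered unit (respectively counit) of $(\Psi,\Phi)$, and not merely some isomorphism between the same functors. Verifying this means tracing the base-change and projection-formula isomorphisms used to build those lemmas through the definitions of the two adjunctions — a diagram chase, but a somewhat delicate one. Everything else is either formal (the conservativity argument, Lemma~\ref{lem3.1}) or a standard property of perfect complexes.
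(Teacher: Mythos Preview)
Your first stage is essentially the paper's argument: both of you apply $j_{0*}$ (respectively $i_{0*}$) to the adjunction morphism, invoke Lemmas~\ref{lem7.3} and~\ref{lem7.6} to identify the result with the unit/counit of $(\Psi,\Phi)$, and conclude by conservativity of pushforward along a closed immersion (the paper phrases this as ``the cone is zero''). You are more explicit than the paper about the one genuinely delicate point, namely that the induced morphism really is the whiskered unit and not merely some endomorphism; the paper asserts this without comment.

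Your second stage differs from the paper's. The paper argues \emph{directly} that $\Phi_0$ and $\Psi_0$ preserve perfect complexes, by decomposing each as a composite of pullbacks (which obviously preserve perfects) and a pushforward of the form $\Rd\pi_{M_0*}(\Ks_0\spdot\otimes^{\Ld}-)$ or $[\Rd\pi_{Y_0*}(\Gs_0\spdot\otimes^{\Ld}-)]^G$, and then checking via the projection formula that the latter take perfects to objects of finite Tor-amplitude. You instead invoke the characterisation of perfect complexes as those with uniformly bounded $\uExt$-amplitude and observe that any equivalence preserves this property. Your route is shorter and more conceptual, and has the pleasant feature that it never unpacks the integral-kernel description of $\Phi_0,\Psi_0$; the paper's route is more self-contained, in that the perfectness criterion it uses (finite Tor-amplitude) is closer to the definition and does not require the dimension-shift argument you sketch. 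Either way one lands on Lemma~\ref{lem3.1} to finish.
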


\begin{proof}
Let us prove that the composition $\Phi_0 \Psi_0$ is isomorphic to the identity functor on $\Dd^G(M_0)$. The composition in the different order is computed similarly. Consider an object $\Es\spdot \in\Dd^G(M_0)$ and denote the cone of the adjunction morphism $\Es\spdot \fl \Phi_0 \Psi_0\Es\spdot$ by $\Fs\spdot$. Applying $j_{0*}$ yields an exact triangle
$$
\xymatrix@C-1mm{j_{0*}\Es\spdot \ar[r] & j_{0*}\Phi_0 \Psi_0\Es\spdot \ar[r] & j_{0*}\Fs\spdot \ar[r] & j_{0*}\Es\spdot[1]}.
$$
Combining Lemma~\ref{lem7.6} and Lemma~\ref{lem7.3} we get
$$
j_{0*}\Phi_0 \Psi_0(-) \cong \Phi i_{0*}\Psi_0(-) \cong \Phi \Psi j_{0*}(-).
$$
Hence, $j_{0*}\Fs\spdot$ is isomorphic to the cone of the morphism $j_{0*}\Es\spdot \fl  \Phi \Psi j_{0*}\Es\spdot$. Since $\Phi$ is an equivalence and $j_0$ is a closed immersion, one obtains $\Fs\spdot \cong 0$. The conclusion is that the adjunction morphism $\Es\spdot \fl \Phi_0 \Psi_0\Es\spdot$ is an isomorphism.

We next show that the functors $\Phi_0$ and $\Psi_0$ induce equivalences between $\Dd_{\Sg}(Y_0)$ and $\Dd_{\Sg}^G(M_0)$. Let us first make an observation. Let $\Es\spdot$ be a perfect complex on $Y_0 \times M_0$ and let us consider the object $\Rd \pi_{M_0*}(\Ks_0\spdot \otimes^{\Ld}\Es\spdot)$ in the derived category of coherent sheaves on $M_0$. We claim that $\Rd \pi_{M_0*}(\Ks_0\spdot \otimes^{\Ld}\Es\spdot)$ is a perfect complex on $M_0$. To substantiate this claim, it suffices to verify that $\Rd \pi_{M_0*}(\Ks_0\spdot \otimes^{\Ld}\Es\spdot) \otimes^{\Ld}\Fs\spdot$ is an object of $\Dd(M_0)$ for every $\Fs\spdot$ in $\Dd(M_0)$ (see, e.g.~\cite[Lemma~1.2]{HLS07}). But this follows at once from the projection formula for the morphism $\pi_{M_0}$. Similarly, we check that $\Rd \pi_{Y_0*}(\Gs_0\spdot \otimes^{\Ld} \Es\spdot)$ is a perfect complex on $Y_0$. The same situation prevails in the equivariant setting. 

Now, the functors $\pi_{Y_0}^*(-\otimes \rho_0)$ and $\pi_{M_0}^*$ are exact and take perfect complexes to perfect complexes. By what we have just seen, the functors  $\Rd \pi_{M_0*}(\Ks_0\spdot\otimes^{\Ld}-)$
and $[\Rd \pi_{Y_0*}(\Gs_0\spdot \otimes^{\Ld} -)]^G$ also preserve perfect complexes. Hence, owing to Lemma~\ref{lem3.1}, we obtain a functor $\overline{\Phi}_0\colon  \Dd_{\Sg}(Y_0) \fl \Dd_{\Sg}^G(M_0)$ and this functor has the left adjoint $\overline{\Psi}_0\colon  \Dd_{\Sg}^G(M_0) \fl \Dd_{\Sg}(Y_0)$. As the composition $\Phi_0 \Psi_0$ is isomorphic to the identity functor, the composition $\overline{\Phi}_0 \overline{\Psi}_0$ is also isomorphic to the identity functor on $ \Dd_{\Sg}^G(M_0)$. A similar argument shows that the composition  $\overline{\Psi}_0 \overline{\Phi}_0$ is isomorphic to the identity functor on $ \Dd_{\Sg}(Y_0)$. The result then follows immediately.
\end{proof}

It seems appropriate to conclude by examining the implications of this result in the specific context of Section~\ref{phys-arg}. Let $G=\Zx_n$ be a cyclic group in $\uSL(n,\Cx)$ acting on $M=\Cx^n$ and let $Y$ be the canonical crepant resolution of the quotient $X=M/G$. Explicitly we choose coordinates $x_1,\dots,x_n$ on $M$ in terms of which the action of the generator in $G$ is given by $(x_1,\dots,x_n) \mapsto (\varepsilon x_1,\dots,\varepsilon x_n)$ where $\varepsilon= \exp (2 \pi i /n)$ is a fixed $n$th root of unity. The space $Y$ is the blow up of the unique singular point of $X$. It can be described explicitly as follows. Write $P=\Px^{n-1}$ for the projective space with homogeneous coordinates  $x_1,\dots,x_n$. Then $Y=\utot(\Os_{P}(-n))$ is the total space of the line bundle $\Os_{P}(-n)$ and the natural map $\tau\colon  Y \fl X$ is simply contracting the zero section. Let $\Zs \subset Y \times M$ denote the fiber product of $Y$ and $M$ over $X$. Then $\Zs$ can be identified with the total space $\Zs=\utot(\Os_{P}(-1))$ and the map $q\colon \Zs \fl M$ is again the contraction of the zero section, this time to a smooth point  \textendash the origin $0 \in M$. All this data can be conveniently organized in the commutative diagram
$$
\xymatrix@C-1mm@R-3mm{ & \Zs  \ar[ld]_-{\zeta} \ar[dd]_-{p} \ar[rr]^-{q} & & M \ar[dd]^-{\pi} \\
             P &  & \\              
               & Y \ar[lu]^-{\eta} \ar[rr]_-{\tau} &  & X}
$$
where $\eta\colon Y \fl P$ and $\zeta\colon \Zs \fl P$ denote the natural projections and $p\colon  \Zs \fl Y$ is the map of taking a quotient by $G$. Note that the group $G$ acts on $\Zs$ by simply multiplying by $\varepsilon$ along the fibers of $\Os_{P}(-1) \fl P$ and so the map $p\colon  \Zs \fl Y$ can also be viewed as the map raising into $n$th power along the fibers of the line bundle $\Os_{P}(-1)$. Conversely, we can view $\Zs$ as the canonical $n$th root cover of $Y$ which is branched along the zero section $Q \subset Y$ of $\eta$.

Now let $f \in \Cx[x_1,\dots,x_n]$ be a homogeneous polynomial of degree $n$. Then $f$ can be viewed as a regular function on $M$ with a critical point at the origin which is invariant with respect to the action of $G$ on $M$. This way, we get a singular Landau-Ginzburg model $(M,f)$ with an action of $G$.  Let $S$ be the hypersurface of degree $n$ in $P=\Px^{n-1}$ given by the homogeneous equation $f=0$. Consider the associated affine cone over $S$, namely, the hypersurface $M_0$ given in $M=\Cx^n$ by exactly the same equation $f=0$. It is evident that the singular fiber of the map $f\colon M \fl \Cx$ over the point $0 \in \Cx$ is precisely $M_0$. Let $g\colon Y \fl \Cx$ be defined as before, and let $Y_0$ denote the fiber of $g$ over the point $0$. Then $Y_0$ is a normal crossing variety with irreducible components $Y'_0$ and $Y''_0$. One component $Y'_0$ is isomorphic to the total space of the line bundle $\Os_P(-n)\vert_S$ over $S$. The second component $Y''_0$ is isomorphic to $Q$.

It is proved in \cite[Proposition~2.40]{Bl07} that $G\text{-}\!\uHilb(M)$ is isomorphic to $Y$. Moreover, the tautological bundles on $G\text{-}\!\uHilb(M)$ (see \cite{Re96} for the definition) are $\eta^* \Os_P, \eta^* \Os_P(1),\dots,\eta^* \Os_P(n-1)$. It then follows from \cite[Example~4.3]{Bridg05} that $\Phi$ is an equivalence of categories and  we can apply Theorem~\ref{thmMcKayLG} to obtain $\Dd_{\Sg}(Y_0) \cong \Dd^G_{\Sg}(M_0)$. We are now set to establish the claim made at the end of Section~\ref{phys-arg}.

\begin{cor}
Let the context be as above. Then the category of D-branes in the Landau-Ginzburg model $(Y,g)$ is equivalent to the category of D-branes in the Landau-Ginzburg orbifold $(M,f)$. 
\end{cor}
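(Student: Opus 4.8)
The plan is to assemble the corollary directly from the equivalences already in hand, after interpreting each category of D-branes according to the conventions adopted above. Since $Y$ is non-affine, the category of D-branes attached to the Landau-Ginzburg model $(Y,g)$ is, by definition, the triangulated category of singularities $\Dd_{\Sg}(Y_0)$ of the fibre $Y_0=g^{-1}(0)$, following \cite{O105}; and the category of D-branes of the Landau-Ginzburg orbifold $(M,f)$ is the category $\uMF^G(f)$ of $G$-equivariant matrix factorizations, as explained in Section~\ref{phys-arg}. So the content of the corollary is the equivalence $\Dd_{\Sg}(Y_0)\cong \uMF^G(f)$.

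First I would check that the hypotheses of the two structural theorems are satisfied here. The polynomial $f$ is homogeneous of degree $n$, hence invariant under the scaling action of $G=\Zx_n$ on $M$, so $(M,f)$ is a Landau-Ginzburg model with a $G$-action under which $f$ is equivariant; by Euler's identity every critical value of $f$ equals $0$, so $f$ has a single critical value at the origin, and an isolated critical point there under the standing transversality hypothesis. Applying Theorem~\ref{branes-singularities} to the affine pair $(M=\uSpec\Cx[x_1,\dots,x_n],f)$ then gives an equivalence $\uMF^G(f)\cong \Dd^G_{\Sg}(M_0)$, where $M_0=f^{-1}(0)$ is the affine cone over $S$.

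It remains to link $\Dd^G_{\Sg}(M_0)$ with $\Dd_{\Sg}(Y_0)$. By \cite[Proposition~2.40]{Bl07} the scheme $G\text{-}\!\uHilb(M)$ is isomorphic to $Y$, with tautological bundles $\eta^*\Os_P,\dots,\eta^*\Os_P(n-1)$, and by \cite[Example~4.3]{Bridg05} the functor $\Phi$ is an equivalence; since $\tau\colon Y\fl X$ is the crepant resolution by construction, Assumption~\ref{assumBKR} holds. Theorem~\ref{thmMcKayLG} therefore supplies inverse equivalences $\overline{\Phi}_0$ and $\overline{\Psi}_0$ between $\Dd_{\Sg}(Y_0)$ and $\Dd^G_{\Sg}(M_0)$. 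Composing with the equivalence of the previous paragraph yields
$$
\Dd_{\Sg}(Y_0)\;\cong\;\Dd^G_{\Sg}(M_0)\;\cong\;\uMF^G(f),
$$
which is exactly the claimed equivalence between the category of D-branes in $(Y,g)$ and that in the orbifold $(M,f)$.

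Since every component of the argument has already been established, there is no substantive obstacle here; the only point demanding attention is the verification, in this particular geometric situation, that it is precisely the functor $\Phi$ (and not merely some abstract derived equivalence) that realizes the derived McKay correspondence, so that Theorem~\ref{thmMcKayLG} applies verbatim — and this is exactly what the cited results of \cite{Bl07} and \cite{Bridg05} provide.
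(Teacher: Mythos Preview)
Your proposal is correct and follows essentially the same approach as the paper: the corollary is stated there as an immediate consequence of the preceding paragraph, which verifies Assumption~\ref{assumBKR} via \cite{Bl07} and \cite{Bridg05} and then invokes Theorem~\ref{thmMcKayLG} to get $\Dd_{\Sg}(Y_0)\cong\Dd^G_{\Sg}(M_0)$, with the identification of the D-brane categories (in particular $\uMF^G(f)\cong\Dd^G_{\Sg}(M_0)$ via Theorem~\ref{branes-singularities}) already having been spelled out in the introduction and Section~\ref{phys-arg}. Your write-up simply makes this chain of equivalences explicit and adds the routine checks on $f$; there is nothing to correct.
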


\begin{add}
In a recent preprint P.~Seidel \cite{Seid08a} gave another example illustrating the use of Theorem~\ref{thmMcKayLG} in the context of Homological Mirror Symmetry. 
\end{add}

%\newpage

\addcontentsline{toc}{section}{Bibliography}

\end{document}